\theoremstyle{definition}
\newtheorem{mydef}{Definition}[section]
\newtheorem{lem}[mydef]{Lemma}
\newtheorem{thm}[mydef]{Theorem}
\newtheorem{cor}[mydef]{Corollary}
\newtheorem{defin}[mydef]{Definition}
\newtheorem{remark}[mydef]{Remark}
\newtheorem{notation}[mydef]{Notation}
\newtheorem{fact}[mydef]{Fact}
\newcommand{\fct}[2]{{}^{#1}#2}
\newcommand{\ba}{\bar{a}}
\newcommand{\bb}{\bar{b}}
\newcommand{\bc}{\bar{c}}
\newcommand{\bu}{\bar{u}}
\newcommand{\ran}[1]{\text{ran}(#1)}
\newcommand{\cf}[1]{\text{cf} (#1)}
\newcommand{\seq}[1]{\langle #1 \rangle}
\newcommand{\leap}[1]{\le_{#1}}
\newcommand{\geap}[1]{\ge_{#1}}
\newcommand{\lea}{\leap{\K}}
\newcommand{\gea}{\geap{\K}}
\newcommand{\K}{\mathbf{K}}
\newbox\noforkbox \newdimen\forklinewidth
\noforkbox\hbox{\lower 2pt\box1\lower
2pt\box0\relax}
\def\unionstick{\mathop{\copy\noforkbox}\limits}
\def\1nf{\unionstick^{(1)}}
\def\2nf{\unionstick^{(2)}}
\def\3nf{\unionstick^{(3)}}
\newcommand{\tp}{\text{tp}}
\newcommand{\gtp}{\text{gtp}}
\newcommand{\gS}{\text{gS}}
\newcommand{\hanfe}[1]{\beth_{\left(2^{#1}\right)^+}}
\newcommand{\EM}{\operatorname{EM}}
\newcommand{\Ll}{\mathbb{L}}
\newcommand{\otp}{\operatorname{otp}}
\newcommand{\LS}{\text{LS}}
\newcommand{\BI}{\mathbf{I}}
\title[Saturation in AECs with amalgamation]{Saturation and solvability in abstract elementary classes with amalgamation}
\date{\today\\
AMS 2010 Subject Classification: Primary 03C48. Secondary: 03C45, 03C52, 03C55, 03C75, 03E55.}
\keywords{Abstract elementary classes; Superstability; Saturation; Solvability; Categoricity; Indiscernibles; Order property}
\author{Sebastien Vasey}
\email{sebv@cmu.edu}
\urladdr{http://math.cmu.edu/\textasciitilde svasey/}
\address{Department of Mathematical Sciences, Carnegie Mellon University, Pittsburgh, Pennsylvania, USA}
\begin{document}

\begin{abstract}
  \begin{thm}\label{sat-abstract}
    Let $\K$ be an abstract elementary class (AEC) with amalgamation and no maximal models. Let $\lambda > \LS (\K)$. If $\K$ is categorical in $\lambda$, then the model of cardinality $\lambda$ is Galois-saturated.
  \end{thm}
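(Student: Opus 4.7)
The plan is to derive the saturation of the unique model in $\K_\lambda$ from categoricity by constructing at least one Galois-saturated model in cardinality $\lambda$; since categoricity in $\lambda$ forces all models of size $\lambda$ to be isomorphic, this suffices. The strategy divides naturally into (a) extracting enough stability below $\lambda$ from the categoricity hypothesis, (b) building a saturated model in $\K_\lambda$ from a chain, and (c) showing that the model obtained is $\lambda$-saturated rather than merely $\mu^+$-saturated for some $\mu < \lambda$.

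First I would use Shelah's Presentation Theorem, together with amalgamation, no maximal models, and the existence of EM models in all cardinals, to show that $\K$ cannot have the Galois order property witnessed below $\lambda$: otherwise the EM construction would yield $2^\lambda$ pairwise non-isomorphic models in $\K_\lambda$, contradicting categoricity. From the failure of the order property one deduces stability in a cofinal set of cardinals $\mu < \lambda$ with $\mu \ge \LS(\K)$. For such a $\mu$, Galois types over models of cardinality $\mu$ are determined by a well-behaved independence notion (for instance, $\mu$-non-splitting), and in particular $\mu$-saturated models in $\K_\mu$ exist and are unique up to isomorphism.

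Next I would build, by transfinite recursion of length $\lambda$, a continuous $\lea$-increasing chain $\langle M_i : i < \lambda \rangle$ with $M_0$ of cardinality $\LS(\K)$, $\|M_{i+1}\| = \|M_i\| + \aleph_0$ if $\|M_i\| < \lambda$, and $M_{i+1}$ realizing every Galois type over $M_i$ (using stability in $\|M_i\|$ to list the types). Set $M := \bigcup_{i < \lambda} M_i$; this model has cardinality $\lambda$. To see that $M$ is Galois-saturated, take any $N \lea M$ with $\|N\| < \lambda$ and any $p \in \gS(N)$; by the cofinality behavior of the chain, there is $i < \lambda$ with $N \lea M_i$, and then the restriction of $p$ to $M_i$ is realized in $M_{i+1}$, and this realization realizes $p$ by the locality of Galois types established in the previous step (here one must use amalgamation to argue that equality of restrictions to $M_i$ of types over $N$ implies equality, or equivalently extend $p$ upward and pull back).

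The main obstacle is the last point: showing that realizing $p \rest M_i$ suffices for realizing $p$. In full generality, Galois types need not be local, and a realization of $p \rest M_i$ need not realize $p$. I expect the proof to navigate this by proving (using categoricity and the EM-machinery) that $\K$ is sufficiently tame below $\lambda$, or alternatively by working directly with a concrete independence notion like $\mu$-non-splitting to show that a non-splitting extension of a realized type over $M_i$ must remain the chosen type over $N$. A secondary subtlety is handling the case where $\lambda$ is not a successor cardinal, which rules out the simplest bookkeeping; here one typically needs to know that stability holds at arbitrarily large $\mu < \lambda$ and that the union of a chain of saturated models of varying cardinalities is saturated, both of which should follow from the order-property argument sketched above. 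Once $M$ is shown $\lambda$-saturated, categoricity in $\lambda$ delivers the conclusion for every model in $\K_\lambda$.
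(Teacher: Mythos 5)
Your high-level outline (rule out the order property below $\lambda$, get stability, build a saturated model as a union of a chain, and reduce the singular case to a chain-of-saturated-models statement) points in the right direction, but the two steps you treat as routine are exactly where the difficulty lies, and as written they have genuine gaps. First, your argument for the failure of the order property (``the EM construction would yield $2^\lambda$ pairwise non-isomorphic models'') only rules out the order property of \emph{unbounded} length, or of length at least a Hanf-type bound such as $\beth_{(2^\mu)^+}$. What is actually needed is the failure of the $(2,\mu)$-order property of the \emph{short} length $\mu^+$ for every $\mu<\lambda$, and when $\lambda$ is small (e.g.\ $\LS(\K)=\aleph_0$, $\lambda=\aleph_\omega$) the witnessing sequence of length $\mu^+$ lives in a model of size $\lambda<\beth_{(2^\mu)^+}$, so no many-models argument applies. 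The paper's main new ingredient is precisely the bridge from short to long: inside $\EM_{\tau(\K)}(\lambda,\Phi)$ one can extract from any sequence of length $\mu^+$ a \emph{strictly} indiscernible subsequence (Fact \ref{main-fact}), which can then be stretched to arbitrary length (Fact \ref{strict-indisc-fact}), turning an order property of length $\mu^+$ into one of all lengths and hence into instability below $\lambda$ (Theorem \ref{op-thm}). Without this step your stage (a) does not deliver what your stage (c) needs.

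Second, your claim that ``the union of a chain of saturated models of varying cardinalities is saturated'' should ``follow from the order-property argument'' is the other load-bearing point and is not automatic: in this generality it is VanDieren's theorem (Fact \ref{sat-sym-fact}) that superstability plus \emph{$\mu$-symmetry} implies unions of increasing chains of $\mu^+$-saturated models are $\mu^+$-saturated, and symmetry in $\mu$ is obtained from the failure of the $(2,\mu)$-order property of length $\mu^+$ (Fact \ref{op-sym-fact} combined with Theorem \ref{op-thm}); that is, it again depends on the strict-indiscernibility machinery, not on stability alone. Relatedly, your chain $\seq{M_i : i<\lambda}$ with $M_{i+1}$ realizing all types over $M_i$ does not yield saturation when $\lambda$ is singular, since a submodel $N\lea M$ with $\cf{\lambda}\le\|N\|<\lambda$ need not be contained in any $M_i$; the paper instead takes a chain of length $\cf{\lambda}$ of models of size $\lambda$, the $i$-th being $\mu_i^+$-saturated, and applies the chain-saturation theorem at each $\mu_i$ (Lemma \ref{saturated-lem}). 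By contrast, the ``locality'' obstacle you flag in stage (c) is not a real one: given $N\lea M_i$ and $p\in\gS(N)$, amalgamation lets you extend $p$ to some $q\in\gS(M_i)$, and any realization of $q$ realizes $p$; no tameness is needed there.
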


  This answers a question asked independently by Baldwin and Shelah. We deduce several corollaries: $\K$ has a unique limit model in each cardinal below $\lambda$, (when $\lambda$ is big-enough) $\K$ is weakly tame below $\lambda$, and the thresholds of several existing categoricity transfers can be improved.

  We also prove a downward transfer of solvability (a version of superstability introduced by Shelah):

  \begin{cor}\label{solv-abstract}
    Let $\K$ be an AEC with amalgamation and no maximal models. Let $\lambda > \mu > \LS (\K)$. If $\K$ is solvable in $\lambda$, then $\K$ is solvable in $\mu$.
  \end{cor}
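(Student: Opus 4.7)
The plan is to use Theorem \ref{sat-abstract} as a black box and reduce the transfer to bookkeeping about EM blueprints. Unfolding Shelah's definition, $\K$ being solvable in $\lambda$ yields a proper-for-linear-orders blueprint $\Phi$ with $|\tau(\Phi)| \le \LS(\K)$ such that $\EM_{\tau(\K)}(I, \Phi)$ is a superlimit in $\K_\lambda$ for every linear order $I$ with $|I| = \lambda$. Under amalgamation and no maximal models, the existence of a superlimit in $\lambda$ together with the universality of the EM construction forces categoricity in $\lambda$: every model of size $\lambda$ is sandwiched between EM models of $\Phi$ of size $\lambda$, and these are all isomorphic to the superlimit. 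Hence Theorem \ref{sat-abstract} applies and the unique model in $\K_\lambda$ is Galois-saturated.

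Next, I would invoke the downward corollaries announced immediately after Theorem \ref{sat-abstract}: uniqueness of limit models in every $\mu$ with $\LS(\K) < \mu < \lambda$, and weak tameness below $\lambda$. For each such $\mu$ these produce a unique limit model $M^{\star}_{\mu} \in \K_\mu$, which is Galois-saturated; under amalgamation and no maximal models the unique limit model is also a superlimit of $\K_\mu$.

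Finally, I would verify that the same blueprint $\Phi$ realizes $M^{\star}_{\mu}$ over every linear order of size $\mu$. Fix $I$ with $|I| = \mu$, write it as a directed union of suborders $I_\alpha$ of strictly smaller cardinality, and use the coherence of the EM functor to obtain $\EM_{\tau(\K)}(I, \Phi) = \bigcup_\alpha \EM_{\tau(\K)}(I_\alpha, \Phi)$. This directed union is a limit model in $\K_\mu$; by uniqueness of limit models in $\mu$ it is isomorphic to $M^{\star}_{\mu}$, hence is a superlimit. This establishes solvability of $\K$ in $\mu$ via the very same blueprint $\Phi$.

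The substantive content of the corollary is entirely absorbed into Theorem \ref{sat-abstract}; the bridge above is essentially bookkeeping. The main obstacle is therefore not the downward transfer itself but the two preparatory identifications: first, spelling out carefully that solvability in $\lambda$ forces categoricity in $\lambda$ (so that the hypothesis of Theorem \ref{sat-abstract} is met), and second, checking that EM models at size $\mu$ over \emph{arbitrary} linear orders (not merely well-orders) constitute limit models in $\K_\mu$. The latter relies on properness of $\Phi$ for all linear orders and on the fact that the EM functor commutes with directed unions of orders; both are standard but require verification inside the AEC setting.
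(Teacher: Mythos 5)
Your first reduction is already incorrect: solvability in $\lambda$ does \emph{not} imply categoricity in $\lambda$, so Theorem \ref{sat-abstract} cannot be invoked as stated. A superlimit model is universal and closed under unions of chains of its own isomorphic copies, but $\K_\lambda$ may contain many other models; being ``sandwiched between EM models isomorphic to the superlimit'' does not make a model isomorphic to the superlimit (already in the first-order case, any superstable non-categorical theory with a saturated model of size $\lambda$ is solvable but not categorical in $\lambda$ there). The paper handles this point in Corollary \ref{solvable-saturated-cor}(\ref{solvable-sat-2}) by a genuine argument: it first produces \emph{some} saturated model in $\K_\lambda$ (via superstability plus $\mu$-symmetry for all $\mu<\lambda$ and Fact \ref{sat-sym-fact}), then interleaves copies of the superlimit $M$ with saturated models in a continuous chain of length $\lambda$ and uses the superlimit property of $M$ to conclude $M$ is saturated. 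That step is not bookkeeping.

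Two further steps of your bridge also fail. First, your claim that $\EM_{\tau(\K)}(I,\Phi)=\bigcup_\alpha \EM_{\tau(\K)}(I_\alpha,\Phi)$ is a limit model in $\K_\mu$ is unjustified: a limit model is a union of a chain of models of cardinality $\mu$ each \emph{universal} over its predecessor, and a directed union of EM models over small suborders has neither the right cardinalities nor the universality at successor steps. The paper does not attempt this; instead it invokes Shelah's \cite[Subfact 6.8]{sh394} to replace $\Phi$ by a new blueprint $\Psi$ whose EM models are saturated at \emph{every} cardinality in $(\LS(\K),\lambda]$ --- a nontrivial omitting-types-style input that your argument has no substitute for, and which is why the paper's conclusion names a possibly different blueprint $\Psi$. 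Second, passing from ``the unique limit model of $\K_\mu$'' to ``superlimit in $\mu$'' is exactly the delicate point: one must show that the union of an arbitrary increasing chain of copies of the saturated model of size $\mu$ is again saturated, which is Fact \ref{sat-sym-fact} and requires $\mu$-superstability together with $\mu$-symmetry (obtained in the paper from Fact \ref{shvi} and Corollary \ref{sym-cor}, the latter resting on the order-property analysis of Theorem \ref{op-thm}). The paper's own remark after Corollary \ref{solv-downward} stresses that even uniqueness of the limit model does not by itself yield superlimitness. In short, each of the three ``preparatory identifications'' you label as routine is either false or conceals the actual mathematical content of the corollary.
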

\end{abstract}

\maketitle

\tableofcontents

\section{Introduction}

\subsection{Motivation}
Morley's categoricity theorem \cite{morley-cip} states that if a countable theory has a unique model of \emph{some} uncountable cardinality, then it has a unique model in \emph{all} uncountable cardinalities. The method of proof led to the development of stability theory, now a central area of model theory. In the mid seventies, Shelah conjectured a similar statement for classes of models of an $\Ll_{\omega_1, \omega}$-theory \cite[Open problem D.3(a)]{shelahfobook} and more generally for abstract elementary classes (AECs) \cite[Conjecture N.4.2]{shelahaecbook}\footnote{see the introduction of \cite{ap-universal-v11-toappear} for more on the history of the conjecture.}. A key step in Morley's proof was to show that the model in the categoricity cardinal is saturated. In this paper, we lift this step to the framework of AECs which satisfy the amalgamation property and have no maximal models.

In this context, saturation is defined in terms of Galois (orbital) types. Shelah \cite[II.3.10]{sh300-orig} (see also \cite[Theorem 6.7]{grossberg2002}) has justified this definition by showing that (with the hypotheses of amalgamation and no maximal models, \emph{which we make for the remainder of this section}) this notion of saturation is \emph{equivalent} to being model-homogeneous\footnote{where $M \in \K$ is \emph{model-homogeneous} if for every $M_0, N \in \K$ with $M_0 \lea M$, $M_0 \lea N$, and $\|N\| <  \|M\|$, there is a $\K$-embedding $f: N \xrightarrow[M_0]{} M$.} (in particular there can be at most one saturated model of a given cardinality). In a milestone paper, Shelah \cite{sh394} has shown that (again assuming amalgamation and no maximal models) a downward analog of Morley's categoricity theorem holds \emph{if} the starting categoricity cardinal is high-enough \emph{and a successor}. One reason\footnote{but not the only one, see the discussion after \cite[Theorem 2.4.4]{bv-survey-v4-toappear}.} for making the successor assumption was exactly to show that the model in the categoricity cardinal was saturated. Indeed, Shelah observes \cite[Claim 1.7.(b)]{sh394} that if $\K$ is categorical in $\lambda$ and $\cf{\lambda} > \LS (\K)$, then the model of cardinality $\lambda$ is $\cf{\lambda}$-saturated. In particular, if $\lambda$ is regular then the model of cardinality $\lambda$ is saturated. Shelah \cite[Question IV.7.11]{shelahaecbook} and independently Baldwin \cite[Problem D.1.(2)]{baldwinbook09} have asked whether the model of cardinality $\lambda$ is saturated even when $\lambda$ is singular. The present paper answers positively (this is Theorem \ref{sat-abstract} from the abstract, proven here as Corollary \ref{solvable-saturated-cor}).

\subsection{Earlier work}
Shelah and Villaveces (see Fact \ref{shvi}) have shown that (regardless of the cofinality of $\lambda$) categoricity in $\lambda$ implies that a certain local superstability condition (see Definition \ref{ss-def}) holds below $\lambda$. In \cite[Theorem 5.6]{ss-tame-jsl}, we showed that the local superstability condition implies stability (defined in terms of the number of Galois types) in all cardinals if the class $\K$ is $\LS (\K)$-tame (a locality property for Galois types introduced by Grossberg and VanDieren \cite{tamenessone}). Therefore if $\K$ is $\LS (\K)$-tame and categorical in $\lambda > \LS (\K)$, then $\K$ is stable in $\lambda$ and hence the model of cardinality $\lambda$ is saturated. This gives a new proof of the corresponding first-order fact. However without assuming tameness, we \emph{cannot} in general conclude stability in the categoricity cardinal $\lambda$ (there is a counterexample due to Hart and Shelah and analyzed in details by Baldwin and Kolesnikov \cite{hs-example, bk-hs}), thus different ideas are needed.

Shelah \cite[Theorem IV.7.8]{shelahaecbook} claims that the model of cardinality $\lambda$ is $\mu^+$-saturated (for $\mu \ge \LS (\K)$) if $2^{2^{\mu}} + \aleph_{\mu^{+4}} \le \lambda$. We have not fully verified Shelah's proof, which uses PCF theory as well as the theory of Ehrenfeucht-Mostowski (EM) models\footnote{Shelah even claims that it is enough to assume amalgamation and no maximal models in a small subset of cardinal below $\lambda$, but we are unable to verify Shelah's claim that ``(i) + (iii) suffices'' in clause $(e)(\beta)$ of the theorem.}. 

With VanDieren \cite[Corollary 7.4]{vv-symmetry-transfer-afml}, we showed that the model of cardinality $\lambda$ is $\mu^+$-saturated if $\lambda \ge \beth_{\left(2^{\mu^+}\right)^+}$. Assuming the generalized continuum hypothesis (GCH), $\beth_{\left(2^{\mu^+}\right)^+} = \aleph_{\mu^{+3}}$ so the bound is better than Shelah's (but if GCH fails badly then Shelah's bound is better). We conclude that if $\lambda = \beth_\lambda$ then the model of cardinality $\lambda$ is saturated (and assuming Shelah's bound, ``$\lambda = \beth_\lambda$'' can be replaced by ``$\lambda = \aleph_\lambda$ and $\lambda$ is strong limit''). We show here that these hypotheses are not needed (the simplest new case is when $\LS (\K) = \aleph_0$ and $\lambda = \aleph_\omega$).

\subsection{Description of the proof}
The proof uses the symmetry property for splitting first isolated by VanDieren \cite{vandieren-symmetry-apal}. It follows from an earlier result of VanDieren \cite{vandieren-chainsat-apal} that if symmetry holds in a successor cardinal $\mu$ then the model in the categoricity cardinal $\lambda$ is $\mu$-saturated. Further if symmetry in $\mu$ \emph{fails} then $\K$ must satisfy a variant of the order property (defined in terms of Galois types) of length $\lambda$ \cite[Theorem 5.8]{vv-symmetry-transfer-afml}. It turns out that if the length of this order property is bigger than $\gamma := \beth_{(2^{\mu})^+}$ then $\K$ is unstable below $\lambda$ and this contradicts categoricity. The aforementioned result with VanDieren used that $\lambda \ge \gamma$. The key argument of this paper (Theorem \ref{op-thm}) shows that $\K$ will have the order property of length $\gamma$ \emph{even when $\lambda < \gamma$}.

The main ingredient is a little known result of Shelah \cite[Claim 4.15]{sh394} proving from categoricity in $\lambda$ that any sequence of length $\lambda$ contains a \emph{strictly} indiscernible subsequence. Here, indiscernible is as usual defined in terms of Galois types and an indiscernible sequence is strict when (roughly) it can be extended to a longer indiscernible sequence of arbitrary size. For the convenience of the reader (and because Shelah omits several details), we give a full proof of Shelah's claim here (Fact \ref{main-fact}).

\subsection{Solvability}
We can generalize Fact \ref{main-fact} using a weakening of categoricity called \emph{solvability} (see Definition \ref{solv-def} here). Solvability was introduced by Shelah in \cite[Chapter IV]{shelahaecbook} as a possible definition of superstability in the AEC framework (it is equivalent to superstability in the first-order case \cite[Corollary 5.3]{gv-superstability-v5-toappear}). Shelah has asked \cite[Question N.4.4]{shelahaecbook} whether the solvability spectrum satisfies an analog of the categoricity conjecture. Inspired by this question, we showed with Grossberg \cite[Theorem 5.4]{gv-superstability-v5-toappear} that the solvability spectrum is either bounded or a tail \emph{provided that} the AEC is tame (and has amalgamation and no maximal models). As an application of the main result of this paper, we show here \emph{without assuming tameness} (but still using amalgamation and no maximal models) that the solvability spectrum satisfies a \emph{downward} analog of Shelah's categoricity conjecture (this is Corollary \ref{solv-abstract} from the abstract proven here as Corollary \ref{solv-downward}). Assuming tameness, we can also improve the threshold cardinal of our aforementioned work with Grossberg (Corollary \ref{upward-solv}).

\subsection{Other applications}
Other applications of our result can be obtained by taking known theorems that assumed that the model in the categoricity cardinal had some degree of saturation, and removing this saturation assumption from the hypotheses of the theorem! Several consequences are listed in Section \ref{applications-sec}. Especially notable is that uniqueness of limit models\footnote{The reader can consult \cite{gvv-mlq} for more background and motivation on limit models.} holds everywhere below the categoricity cardinal (Corollary \ref{big-cor}.(\ref{big-cor-2}). This gives a proof of the (in)famous \cite[Theorem 3.3.7]{shvi635} (where a gap was identified in VanDieren's Ph.D.\ thesis \cite{vandierenthesis}) \emph{provided that} the class has full amalgamation. The original statement assuming only density of amalgamation bases remains open but we also make progress toward it, fixing a gap of \cite{vandierennomax} isolated in \cite{nomaxerrata}. This is presented in Section \ref{uq-limit-subsec}.

\subsection{Notes}
The background required to read the core (i.e.\ the first four sections) of this paper is only a modest knowledge of AECs (for example Chapters 4 and 8 of \cite{baldwinbook09}) although we rely on (as black boxes) several facts and definitions from the recent literature (especially \cite{vandieren-symmetry-apal, vandieren-chainsat-apal, vv-symmetry-transfer-afml}). To understand the applications in Section \ref{applications-sec}, a more solid background (described in the papers referenced there) including a good knowledge of the work of the author (from which we quote extensively) is needed.

This paper was written while working on a Ph.D.\ thesis under the direction of Rami Grossberg at Carnegie Mellon University and the author would like to thank Professor Grossberg for his guidance and assistance in his research in general and in this work specifically.

\section{Extracting strict indiscernibles}

Everywhere in this paper, $\K$ denotes a fixed AEC (not necessarily satisfying amalgamation or no maximal models). We assume that the reader is familiar with the definitions of amalgamation, no maximal models, Galois types, and (Galois) saturation. We will use the notation from the preliminaries of \cite{sv-infinitary-stability-afml}.

In particular, $\gtp (\bb / A; N)$ denotes the Galois types of $\bb$ over the set $A$ as computed inside $N$ (so we make use of Galois types over sets, defined as for Galois types over models; note also that the definition does not assume amalgamation). We let $\gS^\alpha (A; N)$ denote the set of all Galois types of sequences of length $\alpha$ over $A$ computed in $N$, and let $\gS^\alpha (M)$ denote the set of all Galois types of sequences of length $\alpha$ over $M$ (computed in any extension $N$ of $M$). When $\alpha = 1$, we omit it.

When working with EM models, we will use the notation from \cite[Chapter IV]{shelahaecbook}:

\begin{defin}\cite[Definition IV.0.8]{shelahaecbook}
  For $\mu \ge \LS (\K)$, let $\Upsilon_{\mu}[\K]$ be the set of $\Phi$ proper for linear orders (that is, $\Phi$ is a set $\{p_n : n < \omega\}$, where $p_n$ is an $n$-variable quantifier-free type in a fixed vocabulary $\tau (\Phi)$ and the types in $\Phi$ can be used to generate a $\tau (\Phi)$-structure $\EM (I, \Phi)$ for each linear order $I$; that is, $\EM (I, \Phi)$ is the closure under the functions of $\tau (\Phi)$ of the universe of $I$ and for any $i_0 < \ldots < i_{n - 1}$ in $I$, $i_0 \ldots i_{n - 1}$ realizes $p_n$) with:

  \begin{enumerate}
  \item $|\tau (\Phi)| \le \mu$.
  \item If $I$ is a linear order of cardinality $\lambda$, $\EM_{\tau (\K)} (I, \Phi) \in \K_{\lambda + |\tau (\Phi)| + \LS (\K)}$, where $\tau (\K)$ is the vocabulary of $\K$ and $\EM_{\tau (\K)} (I, \Phi)$ denotes the reduct of $\EM (I, \Phi)$ to $\tau (\K)$. Here we are implicitly also assuming that $\tau (\K) \subseteq \tau (\Phi)$.
  \item For $I \subseteq J$ linear orders, $\EM_{\tau (\K)} (I, \Phi) \lea \EM_{\tau (\K)} (J, \Phi)$.
  \end{enumerate}

  We call $\Phi$ as above an \emph{EM blueprint}.
\end{defin}

The following follows from Shelah's presentation theorem. We will use it without explicit mention.

\begin{fact}\label{em-existence-fact}
  Let $\mu \ge \LS (\K)$. $\K$ has arbitrarily large models if and only if $\Upsilon_{\mu}[\K] \neq \emptyset$.
\end{fact}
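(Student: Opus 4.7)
The statement is a biconditional, so I would split it into two directions. The direction $\Upsilon_{\mu}[\K] \neq \emptyset \Rightarrow$ arbitrarily large models is immediate: given any $\Phi \in \Upsilon_{\mu}[\K]$ and any cardinal $\lambda$, take a linear order $I$ of cardinality $\lambda$; clause (2) of the definition of $\Upsilon_{\mu}[\K]$ produces a member of $\K$ of cardinality $\ge \lambda$ via $\EM_{\tau(\K)}(I, \Phi)$, because the generators $I$ sit inside the $\tau(\Phi)$-Skolem hull (of size $\lambda + |\tau(\Phi)| + \LS(\K)$). Since $\lambda$ was arbitrary, $\K$ has arbitrarily large models.

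The substantive direction is the converse. The plan is to go through Shelah's presentation theorem and then extract indiscernibles by a Morley-style omitting types argument. First I would invoke Shelah's presentation theorem to obtain an expansion $\tau_1 \supseteq \tau(\K)$ with $|\tau_1| \le \LS(\K) \le \mu$, a first-order $\tau_1$-theory $T_1$, and a family $\Gamma$ of partial $\tau_1$-types with $|\Gamma| \le 2^{\LS(\K)}$, such that the reducts of models of $T_1$ omitting $\Gamma$ are precisely the models of $\K$, and such that $\tau_1$-substructure among models omitting $\Gamma$ implies $\lea$ on the reducts.

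Next, since $\K$ has arbitrarily large models, so does the $\PC$-class given by $(T_1,\Gamma,\tau(\K))$. In particular this class contains a model of cardinality $\ge \beth_{(2^{\LS(\K)})^+}$, which exceeds the Hanf number for omitting at most $2^{\LS(\K)}$ types over a vocabulary of size $\le \LS(\K)$. The Morley method for omitting types then yields, for every linear order $I$, a $\tau_1$-structure $\EM(I,\Phi)$ which is a model of $T_1$ omitting every type of $\Gamma$, generated as the Skolem hull of $I$ with the property that every strictly increasing $n$-tuple from $I$ realizes a fixed quantifier-free $\tau_1$-type $p_n$. Setting $\Phi := \{p_n : n < \omega\}$ gives a candidate EM blueprint.

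It remains to verify the three clauses of $\Upsilon_{\mu}[\K]$. Clause (1) is immediate from $|\tau_1| \le \LS(\K) \le \mu$. Clause (2) follows because the Skolem hull of $I$ in the $\tau_1$-sense has cardinality $|I| + \LS(\K)$, lies in $T_1$, and omits $\Gamma$, so its $\tau(\K)$-reduct is in $\K$. Clause (3) is the main coherence check: if $I \subseteq J$, then functoriality of the EM construction ensures $\EM(I,\Phi) \subseteq \EM(J,\Phi)$ as $\tau_1$-structures, and since both omit $\Gamma$, the property of Shelah's presentation recalled above upgrades this to $\EM_{\tau(\K)}(I,\Phi) \lea \EM_{\tau(\K)}(J,\Phi)$. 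The main obstacle is the Morley extraction step, which is where the interaction between Hanf numbers for omitting types and the quantifier-free types of indiscernibles must be handled carefully; everything else reduces to the bookkeeping in Shelah's presentation theorem.
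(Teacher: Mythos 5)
Your argument is correct and follows exactly the route the paper intends: the paper states this fact without proof, attributing it to Shelah's presentation theorem, and your proposal is the standard unpacking of that citation (presentation theorem to realize $\K$ as a pseudo-elementary class omitting at most $2^{\LS(\K)}$ types, then Morley's method past the Hanf number $\beth_{(2^{\LS(\K)})^+}$ to extract the blueprint, with the substructure-to-$\lea$ clause of the presentation theorem giving condition (3)). The easy direction and the verification of the three clauses of $\Upsilon_{\mu}[\K]$ are all handled as one would expect, so there is nothing to add.
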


The next notions (due to Shelah) generalize the concept of an indiscernible sequence in a first-order theory. We prefer not to work inside a monster model (one reason is that some of our application will assume only weak versions of amalgamation, e.g.\ the Shelah-Villaveces context \cite{shvi635}), so give more localized definitions here (but assuming a monster model the definitions below coincide with Shelah's).

\begin{defin}[Indiscernibles, Definition 4.1 in \cite{sh394}]
  Let $\K$ be an AEC. Let $N \in \K$. Let $\alpha$ be a non-zero cardinal, $\theta$ be an infinite cardinal, and let $\seq{\ba_i : i < \theta}$ be a sequence of distinct elements with $\ba_i \in \fct{\alpha}{|N|}$ for all $i < \theta$. Let $A \subseteq |N|$ be a set.

  \begin{enumerate}
  \item We say that \emph{$\seq{\ba_i : i < \theta}$ is indiscernible over $A$ in $N$} if for every $n < \omega$, every $i_0 < \ldots < i_{n - 1} < \theta$, $j_0 < \ldots < j_{n - 1} < \theta$, $\gtp (\ba_{i_0} \ldots \ba_{i_n} / A; N) = \gtp (\ba_{j_0} \ldots \ba_{j_n} / A; N)$. When $A = \emptyset$, we omit it and just say that $\seq{\ba_i : i < \theta}$ is indiscernible in $N$.
  \item We say that $\seq{\ba_i : i < \theta}$ is \emph{strictly indiscernible in $N$} if there exists an EM blueprint $\Phi$ (whose vocabulary is allowed to have arbitrary size) and a map $f$ so that, letting $N' := \EM_{\tau (\K)} (\theta, \Phi)$:

    \begin{enumerate}
    \item $f : N \rightarrow N'$ is a $\K$-embedding. For $i < \theta$, let $\bb_i := f (\ba_i)$.
    \item If for $i < \theta$, $\bb_i = \seq{b_{i, j} : j < \alpha}$, then for all $j < \alpha$ there exists a unary $\tau (\Phi)$-function symbol $\rho_j$ such that for all $i < \theta$, $b_{i, j} = \rho_j^{N'} (i)$.
    \end{enumerate}
  \item Let $A \subseteq |N|$. We say that $\seq{\ba_i : i < \theta}$ is \emph{strictly indiscernible over $A$ in $N$} if there exists an enumeration $\ba$ of $A$ such that $\seq{\ba_i \ba : i < \theta}$ is strictly indiscernible in $N$.
  \end{enumerate}
\end{defin}

Because the compactness theorem is not available, indiscernible sequences may in general fail to extend to arbitrarily length. The point of strict indiscernibles is to correct that defect:

\begin{fact}\label{strict-indisc-fact}
  Assume that $\BI := \seq{\ba_i : i < \theta}$ is strictly indiscernible over $A$ in $N$. Then:

  \begin{enumerate}
  \item\label{strict-indisc-1} $\BI$ is indiscernible over $A$ in $N$.
  \item\label{strict-indisc-2} For every $\theta' \ge \theta$, there exists $N' \gea N$ and $\seq{\ba_i : i \in \theta' \backslash \theta}$ such that $\seq{\ba_i : i < \theta'}$ is strictly indiscernible over $A$ in $N'$.
  \end{enumerate}
\end{fact}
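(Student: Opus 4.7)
The plan is to unpack strict indiscernibility into its witnessing data and then manipulate that data directly via the EM blueprint. By hypothesis, there exist an enumeration $\ba = \seq{a_k : k < |A|}$ of $A$, an EM blueprint $\Phi$, a $\K$-embedding $f : N \to N' := \EM_{\tau (\K)}(\theta, \Phi)$, and unary $\tau(\Phi)$-function symbols $\rho_j$ (for $j < \alpha + |A|$) such that $f(\ba_i \ba)_j = \rho_j^{N'}(i)$ for every $i < \theta$. The preliminary observation I would isolate is that for each $k < |A|$, $\rho_{\alpha + k}^{N'}(i) = f(a_k)$ for all $i < \theta$, so $\rho_{\alpha + k}$ is constant on $\theta$ with value $f(a_k)$. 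Since the equation ``$\rho_{\alpha + k}(x_0) = \rho_{\alpha + k}(x_1)$'' is quantifier-free, its holding on any two elements of $\theta$ forces it into $p_2$, and hence $\rho_{\alpha + k}$ remains the constant function with value $f(a_k)$ inside $\EM(J, \Phi)$ for every linear order $J \supseteq \theta$.

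For part (\ref{strict-indisc-1}), fix $i_0 < \ldots < i_{n-1}$ and $j_0 < \ldots < j_{n-1}$ in $\theta$. First I would embed $\theta$ into a linear order $J$ carrying an order-automorphism $\pi$ with $\pi(i_k) = j_k$ for each $k < n$; this is easy to arrange, e.g.\ by inserting a dense linear order around the finitely many marked points and using back-and-forth. The map $\pi$ induces a $\tau(\Phi)$-automorphism $\hat\pi$ of $\EM(J, \Phi)$, which restricts to a $\K$-automorphism of $\EM_{\tau (\K)}(J, \Phi) \gea N'$. The constancy above forces $\hat\pi$ to fix $f[A]$ pointwise, while by definition it sends $f(\ba_{i_0}) \ldots f(\ba_{i_{n-1}})$ to $f(\ba_{j_0}) \ldots f(\ba_{j_{n-1}})$. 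Transporting the resulting equality of Galois types back through the $\K$-embedding $f$---an invariance that follows directly from the definition and needs no amalgamation---yields the desired equality in $N$.

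For part (\ref{strict-indisc-2}), the idea is to ``pull back'' the larger EM model $M' := \EM_{\tau (\K)}(\theta', \Phi) \gea N'$ along $f$. After renaming if necessary, I choose a set $N^+$ containing $N$ together with a bijection $g : N^+ \to M'$ satisfying $g|_N = f$, and equip $N^+$ with the $\K$-structure making $g$ an isomorphism; because $f$ is already a $\K$-embedding this yields $N \lea N^+$. I then define $\ba_i := \seq{g^{-1}(\rho_j^{M'}(i)) : j < \alpha}$ for $\theta \le i < \theta'$; the data $(\Phi, g, \seq{\rho_j : j < \alpha + |A|})$ now witness, by construction, that $\seq{\ba_i : i < \theta'}$ is strictly indiscernible over $A$ in $N^+$.

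The only mildly delicate step I anticipate is the constancy/propagation lemma for the $\rho_{\alpha + k}$ underpinning part (\ref{strict-indisc-1})---one must observe that constancy on two points is quantifier-free and therefore propagates to every EM superstructure. Once this is in hand, both parts reduce to pure bookkeeping with the EM blueprint, avoiding amalgamation and compactness entirely, in line with the paper's stated generality.
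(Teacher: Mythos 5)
Your overall strategy is sound and, for part (\ref{strict-indisc-1}), matches the paper's (very terse) sketch: the generators of an EM model are quantifier-free indiscernible in $\tau(\Phi)$, and order-preserving maps of the index set induce $\K$-embeddings that transfer this to Galois types; your preliminary observation that the symbols $\rho_{\alpha+k}$ coding $A$ are constant on $\theta$, and remain constant in every $\EM(J,\Phi)$ with $J \supseteq \theta$ because constancy on a pair is a quantifier-free condition belonging to $p_2$, is exactly the point that makes the argument work over $A$ rather than over $\emptyset$. For part (\ref{strict-indisc-2}) you take a genuinely more direct route than the paper: the paper's sketch invokes the compactness theorem on the theory of $\EM(\theta,\Phi)$ expanded by constants, whereas you simply use monotonicity of the blueprint ($\EM_{\tau(\K)}(\theta,\Phi) \lea \EM_{\tau(\K)}(\theta',\Phi)$) and rename along $f$. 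Since the paper's definition of strict indiscernibility packages a full EM blueprint (including clause (3) of the definition of $\Upsilon_\mu[\K]$), your argument is cleaner and loses nothing.

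The one step that does not work as written is the construction of $J$ in part (\ref{strict-indisc-1}). An order-automorphism $\pi$ of $J$ with $\pi(i_0) = j_0$ must map $\{x \in J : x < i_0\}$ onto $\{x \in J : x < j_0\}$; if, say, $\theta = \omega_2$, $i_0 = 5$, and $j_0 = \omega_1$, the latter set has cardinality at least $\aleph_1$, so no $J$ obtained from $\theta$ by ``inserting a dense linear order around the finitely many marked points'' can carry such an automorphism --- the initial segment below $i_0$ would stay countable. You can repair this either by embedding $\theta$ into a strongly finitely homogeneous linear order of large cardinality (e.g.\ a special model of the theory of dense linear orders), or --- more simply, and avoiding automorphisms altogether --- by amalgamating two copies of $\theta$ over the finite partial isomorphism $i_k \mapsto j_k$ to obtain a linear order $J$ with two order-embeddings $h_1, h_2 : \theta \to J$ satisfying $h_1(i_k) = h_2(j_k)$. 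The induced $\K$-embeddings $\hat{h}_1, \hat{h}_2 : N' \to \EM_{\tau(\K)}(J,\Phi)$ both fix $f[A]$ pointwise (by your constancy lemma) and identify $f(\ba_{i_0}) \ldots f(\ba_{i_{n-1}})$ with $f(\ba_{j_0}) \ldots f(\ba_{j_{n-1}})$, which is exactly an atomic witness to the equality of the two Galois types. The rest of your transfer back to $N$ --- renaming so that $f$ becomes an inclusion and using transitivity of Galois-type equality --- is correct and indeed requires no amalgamation in $\K$.
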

\begin{proof}[Proof sketch] \
  \begin{enumerate}
  \item Because $I$ is indiscernible (in the first-order sense, in the vocabulary $\tau (\Phi)$) inside $\EM (I, \Phi)$, and this transfers to Galois types in $\K$.
  \item Use the (first-order) compactness theorem on the theory of the $\EM (\theta, \Phi)$, expanded with constant symbols for the sequence witnessing the strict indiscernibility.
  \end{enumerate}
\end{proof}

The following fact is key to all the subsequent results. It shows that inside an EM model (generated by an ordinal), one can extract a \emph{strictly} indiscernible subsequence from any long-enough sequence. It is due to Shelah and appears as \cite[Claim 4.15]{sh394}. For the convenience of the reader, we give a full proof. 

\begin{fact}\label{main-fact}
  Let $\K$ be an AEC with arbitrarily large models and let $\LS (\K) < \theta \le \lambda$ be cardinals with $\theta$ regular. Let $\kappa < \theta$ be a (possibly finite) cardinal. Let $\Phi \in \Upsilon_{\LS (\K)}[\K]$ be an EM blueprint for $\K$.

  Let $N := \EM_{\tau (\K)} (\lambda, \Phi)$. Let $M \in \K_{\le \LS (\K)}$ be such that $M \lea N$. Let $\seq{\ba_i : i < \theta}$ be a sequence of distinct elements such that for all $i < \theta$, $\ba_i \in \fct{\kappa}{|N|}$.

  If $\theta_0^\kappa < \theta$ for all $\theta_0 < \theta$, then there exists $w \subseteq \theta$ with $|w| = \theta$ such that $\seq{\ba_i : i \in w}$ is strictly indiscernible over $M$ in $N$.
\end{fact}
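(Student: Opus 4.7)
The plan is to first express each $\ba_i$ as an EM-term in a tuple from $\lambda$, then pigeonhole to normalize the representation on a $\theta$-sized subsequence. Since $\|M\| \le \LS (\K)$, fix $A \subseteq \lambda$ with $|A| \le \LS (\K)$ and $M \subseteq \EM_{\tau(\K)}(A, \Phi)$. For each $i < \theta$ and $j < \kappa$, write $a_{i,j} = \sigma_{i,j}(\bar{\gamma}_{i,j})$ with $\sigma_{i,j} \in \tau(\Phi)$ and $\bar{\gamma}_{i,j}$ a finite increasing tuple from $\lambda$; let $\bar{\beta}_i$ enumerate $\bigcup_{j < \kappa} \bar{\gamma}_{i,j}$ in increasing order (length $\ell_i \le \kappa$), and bundle the data $(\sigma_{i,j}, \bar{\gamma}_{i,j})_{j < \kappa}$ into a ``$\kappa$-template'' $\bar{\sigma}_i$ so that $\ba_i = \bar{\sigma}_i(\bar{\beta}_i)$. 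Put $\theta_0 := \LS (\K) + \kappa < \theta$, so the hypothesis yields $\theta_0^\kappa < \theta$; this bounds the number of possible templates, lengths $\ell_i$, and Dedekind cuts of $\bar{\beta}_i$ over $A$. Regularity of $\theta$ and iterated pigeonhole then produce $w_2 \subseteq \theta$ with $|w_2| = \theta$ on which $\bar{\sigma}_i$, $\ell_i$, and the cut of $\bar{\beta}_i$ over $A$ are all constant---say equal to $\bar{\sigma}$, $\ell$, and a fixed cut.

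Next I refine to $w \subseteq w_2$ with $|w| = \theta$ such that $\seq{\bar{\beta}_i : i \in w}$ is order-indiscernible in $(\lambda, <)$ over $A$: for any $m < \omega$ and any $i_1 < \cdots < i_m$ and $j_1 < \cdots < j_m$ in $w$, the map sending $\bar{\beta}_{i_k}$ to $\bar{\beta}_{j_k}$ componentwise and fixing $A$ extends to an order isomorphism of the induced subsets of $\lambda$. This is done by a transfinite recursion of length $\theta$ aimed at a fixed canonical interleaving pattern: at stage $\alpha$, pick $i_\alpha \in w_2$ so that $\bar{\beta}_{i_\alpha}$ realizes the pattern with $\seq{\bar{\beta}_{i_\beta} : \beta < \alpha}$. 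The number of candidate joint patterns at stage $\alpha$ is bounded by $(\LS (\K) + |\alpha| \cdot \kappa + 1)^\kappa < \theta$ (via the hypothesis together with the cut-normalization), so regularity of $\theta$ ensures that the set of indices realizing the target pattern stays of size $\theta$ throughout. This step is the \textbf{main obstacle}: standard partition calculus $\theta \to (\theta)^m_\mu$ for finite $m \ge 2$ would require $\theta$ to be weakly compact, so one must exploit that the $\bar{\beta}_i$ live in a linear order (and that their cut over $A$ is already constant) to reduce the Ramsey-type problem to a step-by-step construction compatible with mere regularity and $\kappa$-closure of $\theta$.

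Finally, I assemble the extracted data into a new EM blueprint $\Phi'$. Let $\tau' := \tau(\Phi) \cup \{\rho_j : j < \kappa\} \cup \{c_x : x \in \lambda\}$ with $\rho_j$ new unary function symbols and $c_x$ new constants. For a linear order $J$, define $\EM(J, \Phi')$ to have $\tau(\Phi)$-reduct $\EM(L_J, \Phi)$, where $L_J$ is obtained from $\lambda$ by replacing the set $\bigcup_{i \in w} \bar{\beta}_i$ with $J$-many $\ell$-blocks inserted according to the canonical pattern; interpret each $c_x$ as its designated element of $L_J$, and $\rho_j$ on input $j \in J$ as the $j$-th coordinate of $\bar{\sigma}$ applied to the $\ell$-block attached to $j$. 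Coherence under $I \subseteq J$ and the property $\EM_{\tau(\K)}(J, \Phi') \in \K$ follow from the corresponding properties of $\Phi$ together with the order-indiscernibility. Then the identity on $A \cup (\lambda \setminus \bigcup_{i \in w} \bar{\beta}_i)$, extended by sending the $r$-th element of $\bar{\beta}_i$ (for $i \in w$) to the $r$-th element of the $i$-block of $\theta$, induces the required $\K$-embedding $f: N \to \EM_{\tau(\K)}(\theta, \Phi')$ satisfying $f(a_{i,j}) = \rho_j^{N'}(i)$, witnessing that $\seq{\ba_i : i \in w}$ is strictly indiscernible over $M$ in $N$.
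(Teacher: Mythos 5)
Your overall strategy is the same as the paper's: write each $\ba_i$ as a tuple of EM-terms in an index tuple from $\lambda$, pigeonhole (using $\theta_0^\kappa<\theta$) to fix the terms, the lengths, and the cuts over the base, extract a $\theta$-sized subfamily whose index tuples are order-indiscernible over the base in $(\lambda,<)$, and repackage the result as a new blueprint with unary function symbols. The reduction steps and the final blueprint construction are in order.

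The gap is exactly at the step you yourself flag as the main obstacle, and your proposed resolution does not close it. You claim that at stage $\alpha$ of the recursion the set of indices realizing ``the target pattern'' over the previously chosen tuples stays of size $\theta$ because there are fewer than $\theta$ candidate patterns. Pigeonhole only gives that \emph{some} pattern is realized by $\theta$-many remaining indices at stage $\alpha$; it does not give that this is the pattern you fixed in advance, nor that the same pattern survives from stage to stage (the surviving pattern depends on the earlier choices). In particular, nothing in your argument controls the pairwise intersections of the tuples $\bar{\beta}_i$: if, say, $\bar{\beta}_0\cap\bar{\beta}_1\neq\bar{\beta}_0\cap\bar{\beta}_2$, order-indiscernibility already fails for pairs, and the existence of $\theta$-many indices whose tuples meet every previously chosen tuple in one fixed set is a genuine combinatorial fact, not a bookkeeping convention. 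This is precisely what the paper supplies via the $\Delta$-system lemma (applicable because $\theta$ is regular and $\theta_0^\kappa<\theta$ for all $\theta_0<\theta$): it first passes to a $\Delta$-system with a fixed root, normalizes the cuts of the tuple entries over the index set of $M$, and only then uses the well-order on $\lambda$ to arrange the non-root blocks coherently. Your recursion would have to re-derive the $\Delta$-system lemma inside the sentence ``the set of indices realizing the target pattern stays of size $\theta$ throughout,'' and as written it does not. Inserting the $\Delta$-system step (and then verifying that root-plus-cut normalization together with the ordinal structure of $\lambda$ yields the order-indiscernibility) repairs the argument and brings it in line with the paper's proof.
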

\begin{remark}
  We do \emph{not} assume amalgamation (we will work entirely inside $\EM_{\tau (\K)} (\lambda, \Phi)$).
\end{remark}
\begin{remark}
  The main case for us is $\kappa < \aleph_0$, where the cardinal arithmetic assumption holds trivially and the proof is simpler.
\end{remark}
\begin{remark}
  We are assuming that $\|M\| \le \LS (\K)$ only to simplify the notation: if $\mu := \|M\| \in (\LS (\K), \theta)$, we can just replace $\K$ by $\K_{\ge \mu}$.
\end{remark}
\begin{proof}[Proof of Fact \ref{main-fact}]
  First we claim that one can assume without loss of generality that $\kappa < \LS (\K)$. Assume that the statement of the lemma has been proven for that case. If $\kappa > \LS (\K)$ one can replace $\K$ with $\K_{\ge \kappa}$ (and increase $M$) so assume that $\kappa \le \LS (\K)$. Now if $\kappa = \LS (\K)$, then $2^{\LS (\K)} = \kappa^\kappa < \theta$ so we can replace $\K$ by $\K_{\ge \LS (\K)^+}$ and work there. Thus assume without loss of generality that $\kappa < \LS (\K)$.
  
  Pick $u \subseteq \lambda$ such that $|u| = \theta$, $M \lea  N_0 := \EM_{\tau (\K)} (u, \Phi)$, and $\ba_i \in \fct{\kappa}{|N_0|}$ for all $i < \theta$. Increasing $M$ if necessary, we can assume without loss of generality that $M = \EM_{\tau (\K)} (u', \Phi)$ for some $u' \subseteq u$ with $|u'| = \LS (\K)$.

  For each $i < \theta$, we can also pick $u_i \subseteq u$ with $|u_i| < \kappa^+ + \aleph_0$ such that $\ba_i \in \fct{\kappa}{|\EM_{\tau (\K)} (u_i, \Phi)|}$. Without loss of generality $u = u' \cup \bigcup_{i < \theta} u_i$. By the pigeonhole principle, we can without loss of generality fix an ordinal $\alpha < \kappa^+ + \aleph_0$ such that $\otp (u_i) = \alpha$ for all $i < \theta$. List $u_i$ in increasing order as $\bu_i := \seq{u_{i, j} : j < \alpha}$. By pruning further (using that $\LS (\K)^\kappa < \theta$), we can assume without loss of generality that for each $i, i' < \theta$ and $j < \alpha$, the $u'$-cut of $u_{i, j}$ and $u_{i', j}$ are the same (i.e.\ for any $\gamma \in u'$, $\gamma < u_{i, j}$ if and only if $\gamma < u_{i', j}$).

  Pruning again with the $\Delta$-system lemma, we can assume without loss of generality that $\seq{u_i : i < \theta}$ forms a $\Delta$-system (see Definition II.1.4 and Theorem II.1.6 in \cite{kunenbook}; at that point we are using that $\theta_0^\kappa < \theta$ for all $\theta_0 < \theta$). All these pruning steps ensure that $\seq{\bu_i : i < \theta}$ is indiscernible over $u'$ in the vocabulary of linear orders.

  Now list $\ba_i$ as $\seq{a_{i, j} : j < \kappa}$. Fix $i < \theta$. Since $\ba_i \in \fct{\kappa}{|\EM_{\tau (\K)} (u_i)|}$, for each $j < \kappa$ there exists a $\tau (\Phi)$-term $\rho_{i, j}$ of arity $n := n_{i, j}$ and $j_0^{i, j} < \ldots < j_{n - 1}^{i,j} < \alpha$ such that $a_{i, j} = \rho_{i, j} \left(u_{i, j_0^{i,j}} \ldots u_{i, j_{n - 1}^{i, j}}\right)$. By the pigeonhole principle applied to the map $i \mapsto \seq{(\rho_{i, j}, n_{i, j}, j_0^{i,j}, \ldots, j_{n_{i, j} - 1}^{i, j}) : j < \kappa}$ (using that $\LS (\K)^\kappa < \theta$), we can assume without loss of generality that these depend only on $j$, i.e.\ $\rho_{i, j} = \rho_j$, $n_{i, j} = n_j$, and $j_\ell^{i, j} = j_\ell^j$.

  Let $\bu'$ be an enumeration of $u'$, and let $\ba_i' := \ba_i \bu'$. We are assuming that $\kappa < \LS (\K)$ so $\ell (\ba_i') < \LS (\K)$. Let $\bb_i$ be $\ba_i'$ followed by $a_{i, 0}$ repeated $\LS (\K)$-many times (we only do this to make the order type of each element of our sequence $\LS (\K)$ and hence simplify the notation). Then $\ell (\bb_i) = \LS (\K)$. As before, say $\bb_i = \seq{b_{i, j} : j < \LS (\K)}$. Let $u_i' := u_i \cup u'$. Let $\bu_i'$ be an enumeration of $u_i'$ of type $\LS (\K)$ (so not necessarily increasing). Say $\bu_i' = \seq{u_{i,j}' : j < \LS (\K)}$.

  By the pruning carried out previously and the definition of $u'$, we have that for each $i < \theta$ and each $j < \LS (\K)$, there exists a $\tau (\Phi)$-term $\rho_j$ of arity $n_j$ and $j_0^j < \ldots < j_{n - 1}^j < \LS (\K)$ (the point is that they do \emph{not} depend on $i$) such that $b_{i,j} = \rho_j (u_{i, j_0^j}', \ldots, u_{i, j_{n_j - 1}^j}')$. We will build an EM blueprint $\Psi$ witnessing that $\seq{\bb_i : i < \theta}$ is strictly indiscernible.

  For each $n$-ary $\tau (\Phi)$-term $\rho$, each $\gamma_0 < \ldots < \gamma_{n - 1} < \LS (\K)$, and each $i_0 < \ldots < i_{n - 1} < \theta$, we define a function $g = g_{\rho, \gamma_0 \ldots \gamma_{n - 1}, i_0 \ldots i_{n - 1}}$ as follows: for $i < \theta$, $j < \LS (\K)$, let $g (u_{i, j}') := \rho (u_{i + i_0, j + \gamma_0}', \ldots, u_{i + i_{n - 1}, j + \gamma_{n - 1}}')$. We naturally extend $g$ to have domain $N_0 = \EM (u, \Phi)$ (recall from the beginning of the proof that $u = u' \cup \bigcup_{i < \theta} u_i$). The vocabulary of $\Psi$ will consist of the vocabulary of $\tau (\Phi)$ together with a unary function symbol for each $g_{\rho, \gamma_0 \ldots \gamma_{n - 1}, i_0 \ldots i_{n - 1}}$. For $n < \omega$, let $p_n := \tp_{\tau (\Psi)} (u_{0, 0}' u_{1, 0}' \ldots u_{n - 1, 0}' / \emptyset; N_0)$ and let $\Psi := \{p_n : n < \omega\}$. Then $\Psi$ is as desired.
\end{proof}

We will use this fact to study lengths of a variation of the order property from the first-order context defined in terms of Galois types. We again give a local definition (as in \cite{sv-infinitary-stability-afml}), but the global version is due to Shelah.

\begin{defin}[Order property, Definition 4.3 in \cite{sh394}]\label{op-def} \
  \begin{enumerate}
  \item Let $N \in \K$. We say that \emph{$N$ has the $(\kappa, \mu)$-order property of length $\theta$} if there exists a sequence $\seq{\ba_i : i < \theta}$ and a set $A$ such that $\ba_i \in \fct{\kappa}{|N|}$ for every $i < \theta$, $A \subseteq |N|$, $|A| \le \mu$, and for every $i_0 < i_1 < \theta$, $j_0 < j_1 < \theta$, $\gtp (\ba_{i_0} \ba_{i_1} / A; N) \neq \gtp (\ba_{j_1} \ba_{j_0} / A; N)$.
  \item We say that \emph{$\K$ has the $(\kappa, \mu)$-order property of length $\theta$} if some $N \in \K$ has it.
  \item We say that \emph{$\K$ has the $(\kappa, \mu)$-order property} if it has the $(\kappa,\mu)$-order property of length $\theta$ for all cardinals $\theta$.
  \item When $\mu = 0$, we omit it and talk of the $\kappa$-order property.
  \end{enumerate}
\end{defin}
\begin{remark}
  For $T$ a first-order theory and $\K$ its corresponding AEC of models, the following are equivalent:

  \begin{enumerate}
  \item $T$ is unstable.
  \item $\K$ has the $(\kappa, 0)$-order property, for some $\kappa < \aleph_0$.
  \item $\K$ has the $(\kappa, \mu)$-order property, for some cardinals $\kappa$ and $\mu$.
  \end{enumerate}
\end{remark}

An easy consequence of Fact \ref{main-fact} is that if a long-enough order property holds, then we can assume that the sequence witnessing it is strictly indiscernible, and hence extend it:

\begin{thm}\label{op-local-thm}
  Let $\K$ be an AEC with arbitrarily large models and let $\LS (\K) < \lambda$. Let $\kappa < \lambda$ be a (possibly finite) cardinal. Let $\Phi \in \Upsilon_{\LS (\K)}[\K]$ be an EM blueprint for $\K$.

  Let $N := \EM_{\tau (\K)} (\lambda, \Phi)$. If $N$ has the $(\kappa, \LS (\K))$-order property of length $\left(\LS (\K)^\kappa\right)^+$ and $\LS (\K)^\kappa < \lambda$, then $\K$ has the $(\kappa, \LS (\K))$-order property (of \emph{any} length).
\end{thm}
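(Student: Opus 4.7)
The plan is to apply Fact~\ref{main-fact} to extract a strictly indiscernible subsequence from the sequence witnessing the order property, then use Fact~\ref{strict-indisc-fact}(\ref{strict-indisc-2}) to stretch it to arbitrary length, and finally check that the order-property inequality survives the entire process.

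Let $\seq{\ba_i : i < (\LS(\K)^\kappa)^+}$ together with some $A \subseteq |N|$ with $|A| \le \LS(\K)$ witness the $(\kappa, \LS (\K))$-order property of length $(\LS(\K)^\kappa)^+$ in $N$. Set $\theta := (\LS(\K)^\kappa)^+$ and pick $M \in \K_{\le \LS (\K)}$ with $A \subseteq |M|$ and $M \lea N$, using the Löwenheim–Skolem axiom inside $N$. Then $\theta$ is regular (being a successor), $\LS (\K) < \theta \le \lambda$ (the right inequality uses $\LS (\K)^\kappa < \lambda$), and for any $\theta_0 < \theta$ we have $\theta_0 \le \LS(\K)^\kappa$, hence $\theta_0^\kappa \le (\LS(\K)^\kappa)^\kappa = \LS(\K)^\kappa < \theta$. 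So the hypotheses of Fact~\ref{main-fact} are met, and we obtain $w \subseteq \theta$ with $|w| = \theta$ such that $\seq{\ba_i : i \in w}$ is strictly indiscernible over $M$ (and in particular over $A$) in $N$.

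Now, given an arbitrary $\theta' \ge \theta$, apply Fact~\ref{strict-indisc-fact}(\ref{strict-indisc-2}) to extend the extracted subsequence to a strictly indiscernible (over $A$) sequence $\seq{\ba_i' : i < \theta'}$ in some $N' \gea N$, where the original $\seq{\ba_i : i \in w}$ corresponds to the initial $\theta$-block (after reindexing). We claim this extended sequence and $A$ witness the $(\kappa, \LS (\K))$-order property of length $\theta'$ in $N'$, which suffices since $\theta'$ was arbitrary.

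The only point to verify is the order-property inequality $\gtp(\ba_{i_0}' \ba_{i_1}' / A; N') \ne \gtp(\ba_{j_1}' \ba_{j_0}' / A; N')$ for any $i_0 < i_1$ and $j_0 < j_1$ in $\theta'$. By Fact~\ref{strict-indisc-fact}(\ref{strict-indisc-1}), the ``forward'' type $p_+ := \gtp(\ba_{i_0}' \ba_{i_1}' / A; N')$ does not depend on the pair $i_0 < i_1$. The ``backward'' type $p_- := \gtp(\ba_{j_1}' \ba_{j_0}' / A; N')$ is also independent of the pair $j_0 < j_1$: if $\K$-embeddings $f, g$ with domain $N'$, fixing $A$ pointwise and landing in a common amalgam, witness equality of the forward types at two pairs of indices, swapping coordinates in their images gives the same witness for equality of the backward types. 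Thus it suffices to show $p_+ \ne p_-$. Restricting to the $w$-indexed block, the original order-property hypothesis yields $p_+ \ne p_-$ computed in $N$; since Galois type equality in $N'$ restricts down to equality in $N$ (by composing the witnessing $\K$-embeddings with the inclusion $N \lea N'$), the inequality persists in $N'$.

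The main obstacle is conceptual rather than technical: indiscernibility as defined only controls \emph{increasing} tuples, so one has to separately observe that the reversed-pair type is also constant and that the single inequality coming from the original order property propagates under extension. Both follow from the way Galois types interact with $\K$-embeddings, as sketched above.
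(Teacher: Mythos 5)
Your proposal is correct and follows essentially the same route as the paper's proof: extract a strictly indiscernible subsequence via Fact~\ref{main-fact}, stretch it with Fact~\ref{strict-indisc-fact}.(\ref{strict-indisc-2}), and transfer the single type inequality to all pairs by indiscernibility. You are in fact slightly more explicit than the paper on two points it glosses over --- verifying the cardinal arithmetic hypothesis $\theta_0^\kappa < \theta$ of Fact~\ref{main-fact} and observing that the reversed-pair type is also constant along the sequence --- but the argument is the same.
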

\begin{proof}
  Set $\theta := \left(\LS (\K)^\kappa\right)^+$. Fix $\seq{\ba_i : i < \theta}$ and $A$ witnessing that $N$ has the $(\kappa, \LS (\K))$-order property of length $\theta$. Using the Löwenheim-Skolem-Tarski axiom, pick $M \in \K_{\LS (\K)}$ such that $A \subseteq |M|$ and $M \lea N$. By Fact \ref{main-fact}, there exists $w \subseteq \theta$ such that $|w| = \theta$ and $\seq{\ba_i : i \in w}$ is strictly indiscernible over $M$ in $N$. Without loss of generality, $w = \theta$. Let $\theta' \ge \theta$ be an arbitrary cardinal. By Fact \ref{strict-indisc-fact}.(\ref{strict-indisc-2}), we can find $N' \gea N$ and $\seq{\ba_i : i \in \theta' \backslash \theta}$ such that $\BI := \seq{\ba_i : i < \theta'}$ is strictly indiscernible over $M$ in $N'$. By Fact \ref{strict-indisc-fact}.(\ref{strict-indisc-1}), $\BI$ is indiscernible over $M$ in $N'$. We claim that $\BI$ witnesses that $N'$ has the $(\kappa, \LS (\K))$-order property of length $\theta'$. Indeed, if $i_0 < i_1 < \theta'$, $j_0 < j_1 < \theta'$, then by indiscernibility, $p := \gtp (\ba_{i_0} \ba_{i_1} / M; N') = \gtp (\ba_0 \ba_1 / M; N')$ and $q := \gtp (\ba_{j_1} \ba_{j_0} / M; N') = \gtp (\ba_1 \ba_0 / M; N')$. Because the original sequence $\seq{\ba_i : i < \theta}$ was witnessing the $(\kappa, \LS (\K))$-order property, we have that $p \neq q$, as desired. 
\end{proof}
\begin{remark}
  By appending an enumeration of the base set to each element of the sequence, we get that the $(\kappa, \mu)$-order property implies the $(\kappa + \mu)$-order property. However Theorem \ref{op-local-thm}  applies more easily to the $(\kappa, \mu)$-order property: think for example of the case $\kappa < \aleph_0$, when we always have that $\LS (\K)^\kappa = \LS (\K) < \lambda$.
\end{remark}

\section{Solvability and failure of the order property}

We recall Shelah's definition of solvability \cite[Definition IV.1.4]{shelahaecbook}, and mention a more convenient notation for it with only one cardinal parameter. We also introduce semisolvability, which only asks for the EM model to be universal (instead of superlimit). Both variations are equivalent to superstability in the first-order case (see \cite{shvi-notes-v3-toappear} and \cite[Corollary 5.3]{gv-superstability-v5-toappear}). Shelah writes that solvability is perhaps the true analog of superstability in abstract elementary classes \cite[N\S4(B)]{shelahaecbook}.

\begin{defin}\label{solv-def}
  Let $\LS (\K) \le \mu \le \lambda$.
  \begin{enumerate}
  \item $M \in \K$ is \emph{universal in $\lambda$} if $M \in \K_\lambda$ and for any $N \in \K_{\lambda}$ there exists $f: N \rightarrow M$.
  \item \cite[Definition IV.0.5]{shelahaecbook} $M \in \K$ is \emph{superlimit in $\lambda$} if:
    \begin{enumerate}
    \item $M$ is universal in $\lambda$.
    \item $M$ has a proper extension.
    \item For any limit ordinal $\delta < \lambda^+$ and any increasing continuous chain $\seq{M_i : i \le \delta}$ in $\K_{\lambda}$, if $M \cong M_i$ for all $i < \delta$, then $M \cong M_\delta$.
    \end{enumerate}
  \item \cite[Definition IV.1.4.(1)]{shelahaecbook} We say that \emph{$\Phi$ witnesses $(\lambda, \mu)$-solvability} if:
      \begin{enumerate}
        \item $\Phi \in \Upsilon_{\mu}[\K]$.
        \item If $I$ is a linear order of size $\lambda$, then $\EM_{\tau (\K)} (I, \Phi)$ is superlimit in $\lambda$.
      \end{enumerate}
    \item \emph{$\Phi$ witnesses $(\lambda, \mu)$-semisolvability} if:
      \begin{enumerate}
      \item $\Phi \in \Upsilon_{\mu}[\K]$
      \item If $I$ is a linear order of size $\lambda$, then $\EM_{\tau (\K)} (I, \Phi)$ is universal in $\lambda$.
      \end{enumerate}
    \item $\K$ is \emph{$(\lambda, \mu)$-[semi]solvable} if there exists $\Phi$ witnessing $(\lambda, \mu)$-[semi]solvability.
    \item $\K$ is \emph{$\lambda$-[semi]solvable} (or \emph{[semi]solvable in $\lambda$}) if $\K$ is $(\lambda, \LS (\K))$-[semi]solvable.
  \end{enumerate}
\end{defin}
\begin{remark}\label{superlimit-uq}
  By a straightforward argument (similar to the proof of Corollary \ref{solvable-saturated-cor}.(\ref{solvable-sat-2}) here), superlimit models must be unique.
\end{remark}
\begin{remark}\label{solv-rmk}
  Because superlimit models are universal, if $\K$ is $(\lambda, \mu)$-solvable, then $\K$ is $(\lambda, \mu)$-semisolvable. Also, the model in a categoricity cardinal must be superlimit (if it has a proper extension), so if $\K$ has arbitrarily large models and is categorical in $\lambda \ge \LS (\K)$, then $\K$ is solvable in $\lambda$.
\end{remark}

The reader not especially interested in solvability can simply remember the last remark and read ``categorical'' instead of ``solvable'' whenever appropriate.

We can combine Theorem \ref{op-local-thm} with semisolvability (we are still not assuming amalgamation):

\begin{thm}\label{op-thm}
  Let $\lambda > \LS (\K)$. If $\K$ is semisolvable in $\lambda$, then for any cardinals $\mu, \kappa$, and any $M \in \K_\lambda$, $M$ does \emph{not} have the $(\kappa, \mu)$-order property of length $\left((\mu + \LS (\K))^{\kappa}\right)^+$.
\end{thm}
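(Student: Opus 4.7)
The plan is a proof by contradiction that reduces to Theorem \ref{op-local-thm} by shifting the ambient AEC. Suppose some $M \in \K_\lambda$ has the $(\kappa,\mu)$-order property of length $\theta := ((\mu + \LS(\K))^\kappa)^+$, and write $\mu' := \mu + \LS(\K)$.

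First, I would use semisolvability in $\lambda$ to pick $\Phi \in \Upsilon_\mu[\K]$ such that $N := \EM_{\tau(\K)}(\lambda, \Phi)$ is universal in $\K_\lambda$. Embedding $M$ into $N$ and then padding the base set with at most $\LS(\K)$-many additional parameters, $N$ itself has the $(\kappa,\mu')$-order property of length $\theta = ((\mu')^\kappa)^+$. I would then pass to the AEC $\K' := \K_{\ge \mu'}$, for which $\LS(\K') = \mu'$ and $\Phi \in \Upsilon_{\LS(\K')}[\K']$; inside $\K'$ the property at hand becomes precisely the $(\kappa,\LS(\K'))$-order property of length $(\LS(\K')^\kappa)^+$. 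The edge cases where $(\mu')^\kappa \ge \lambda$ are vacuous, since then $\theta > \lambda^\kappa = |N|^\kappa$ and so $N$ cannot even carry a sequence of $\theta$-many distinct $\kappa$-tuples. In the substantive range $(\mu')^\kappa < \lambda$, the side condition $\LS(\K')^\kappa < \lambda$ of Theorem \ref{op-local-thm} is met, and that theorem applied to $\K'$ and $\Phi$ concludes that $\K'$—and hence $\K$—has the $(\kappa,\mu')$-order property of \emph{every} length.

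To close the argument, I would extend via Fact \ref{strict-indisc-fact}.(\ref{strict-indisc-2}) the strictly indiscernible witness produced in the proof of Theorem \ref{op-local-thm} to an arbitrarily long sequence in a larger model, Löwenheim-Skolem reduce to a submodel of cardinality $\lambda$ carrying as much of the witnessing sequence as cardinality permits, and embed back into the universal $N$ via semisolvability, forcing $N$ to realize more distinct $\kappa$-tuples than $|N|^\kappa = \lambda^\kappa$ permits. The main obstacle I anticipate is precisely this last step---cleanly converting ``order property of every length in $\K$'' together with ``$\EM_{\tau(\K)}(\lambda,\Phi)$ universal in $\K_\lambda$'' into an explicit cardinality overflow---along with the cardinal-arithmetic bookkeeping needed to separate the vacuous edge cases $(\mu')^\kappa \ge \lambda$ from the substantive range where Theorem \ref{op-local-thm} genuinely applies.
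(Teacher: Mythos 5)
Your first half is sound and matches the paper's ``Claim 2'': embed the offending $M$ into $\EM_{\tau(\K)}(\lambda,\Phi)$ via semisolvability, normalize so that $\mu' := \mu + \LS(\K) = (\mu')^{\kappa}$ plays the role of $\LS(\K)$ by passing to $\K_{\ge \mu'}$, dispose of the vacuous range $(\mu')^{\kappa} \ge \lambda$ by counting distinct $\kappa$-tuples, and apply Theorem \ref{op-local-thm} to conclude that $\K$ has the $(\kappa,\mu')$-order property of every length. The paper does exactly this.

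The gap is in the endgame, and it is the step you yourself flag as the anticipated obstacle. Having the $(\kappa,\mu')$-order property of every length produces long witnessing sequences only in \emph{large} models; when you Löwenheim--Skolem down to cardinality $\lambda$ you can retain at most $\lambda$ of the witnessing tuples, so what you embed back into $N$ is just a model of size $\lambda$ with the order property of length $\lambda$ --- which is perfectly consistent with $|N| = \lambda$ and produces no overflow of distinct $\kappa$-tuples. No counting argument of this shape can work: the contradiction cannot come from the \emph{number of tuples}, it has to come from the \emph{number of Galois types over a small set}. The paper's proof supplies the two missing ingredients. First (Claim 1 there), semisolvability plus Fact \ref{main-fact} give a stability statement inside models of size $\lambda$: for any $M \in \K_\lambda$ and any $A \subseteq |M|$ with $|A| = \mu'$, one has $|\gS^{\kappa}(A;M)| \le \mu'$ (any $\mu'^{+}$-sequence of $\kappa$-tuples in the EM model contains two realizing the same type over a fixed $M_0 \supseteq A$). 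Second (Claim 3 there), the order property of every length is converted, via Shelah's presentation theorem and Morley's method applied to a linear order $I$ of size $\mu'^{+}$ with a dense suborder $I_0$ of size $\mu'$, into a model $M \in \K_\lambda$ and a set $A$ of size $\mu'$ with $|\gS^{\kappa}(A;M)| > \mu'$: for $i < j$ in $I$ one separates $\gtp(\ba_i/A;M)$ from $\gtp(\ba_j/A;M)$ using a point of $I_0$ strictly between them. These two claims contradict each other, finishing the proof. Your proposal contains neither the stability upper bound nor the dense-linear-order construction, and the cardinality-overflow substitute you suggest for them does not go through.
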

\begin{remark}\label{rmk-op-thm-1}
  The statement is interesting only when $(\mu + \LS (\K))^{\kappa} < \lambda$, but is still vacuously true otherwise (no $M$ of size $\lambda$ can witness an order property of length longer than $\lambda$). The main case for us is $\kappa < \aleph_0$, $\mu \in [\LS (\K), \lambda)$, where the result tells us that the $(\kappa, \mu)$ order property of length $\mu^+$ must fail.
\end{remark}
\begin{remark}
  A similar result is \cite[Claim IV.1.5.(2)]{shelahaecbook}. There the conclusion is weaker (only the $(\kappa, \mu)$-order property fails, nothing is said about the length), and the hypothesis uses solvability instead of semisolvability. The proof relies on Shelah's construction of many models from the order property.
\end{remark}
\begin{proof}[Proof of Theorem \ref{op-thm}]
  Replacing $\mu$ by $(\mu + \LS (\K))^\kappa$ if necessary, we can assume without loss of generality that $\mu \ge \LS (\K)$ and $\mu = \mu^{\kappa}$. If $\mu \ge \lambda$ the result is vacuously true (see Remark \ref{rmk-op-thm-1}) so assume without loss of generality that $\mu < \lambda$. Replacing $\K$ by $\K_{\ge \mu}$ if necessary, we can also assume without loss of generality that $\mu = \LS (\K)$.

  \underline{Claim 1}: For any $M \in \K_\lambda$ and any $A \subseteq |M|$, if $|A| = \mu$, then $|\gS^{\kappa} (A; M)| \le \mu$.

  \underline{Proof of Claim 1}: This is essentially Morley's argument from \cite[Theorem 3.7]{morley-cip} but we give a short proof using (the much stronger) Fact \ref{main-fact}. Fix $M \in \K_\lambda$. Let $\Phi$ be an EM blueprint witnessing semisolvability. By definition of semisolvability, we can embed $M$ inside $\EM_{\tau (\K)} (\lambda, \Phi)$, hence assume without loss of generality that $M = \EM_{\tau (\K)} (\lambda, \Phi)$. Let $A \subseteq |M|$ with $|A| = \mu$ and let $M_0 \in \K_\mu$ be such that $A \subseteq |M_0|$ and $M_0 \lea M$. Let $\seq{\ba_i : i < \mu^+}$ be an arbitrary sequence of elements from $\fct{\kappa}{|M|}$. By Fact \ref{main-fact} (with $\LS (\K)$, $\theta$, $\lambda$, $M$, $N$ there standing for $\mu$, $\mu^+$, $\lambda$, $M_0$, $M$ here), we have (in particular) that there exists $i < j < \mu^+$ such that $\gtp (\ba_i / M_0; M) = \gtp (\ba_j / M_0; M)$. Since $\seq{\ba_i : i < \mu^+}$ was arbitrary, this shows that $|\gS^\kappa (M_0; M)| \le \mu$ and hence $|\gS^{\kappa} (A; M)| \le \mu$, as desired. $\dagger_{\text{Claim 1}}$
  
  \underline{Claim 2}: If there exists $M \in \K_\lambda$ such that $M$ has the $(\kappa, \mu)$-order property of length $\mu^+$, then $\K$ has the $(\kappa, \mu)$-order property.

  \underline{Proof of Claim 2}: Fix $M \in \K_\lambda$ with the $(\kappa, \mu)$-order property of length $\mu^+$. Let $\Phi$ be an EM blueprint witnessing semisolvability. By definition of semisolvability, we can embed $M$ inside $\EM_{\tau (\K)} (\lambda, \Phi)$, hence $\EM_{\tau (\K)} (\lambda, \Phi)$ has the $(\kappa, \mu)$-order property of length $\mu^+$. Now apply Theorem \ref{op-local-thm}. $\dagger_{\text{Claim 2}}$

  \underline{Claim 3}: If $\K$ has the $(\kappa, \mu)$-order property, then there exists $M \in \K_\lambda$ and $A \subseteq |M|$ such that $|A| = \mu$ but $|\gS^\kappa (A; M)| > \mu$.

  \underline{Proof of Claim 3}: We follow the proof of \cite[Fact 5.13]{bgkv-apal} (whose statement was first observed by Shelah \cite[Claim 4.7.(2)]{sh394}). Let $I_0 \subseteq I$ be linear orders such that $|I_0| = \mu$, $|I| > \mu$, and $I_0$ is dense in $I$ (for example, take $\chi$ least such that $\mu < \mu^{\chi}$, and let $I_0$ be $\mu^{<\chi}$ ordered lexicographically and $I$ be $\mu^{\le \chi}$, also ordered lexicographically). Without loss of generality, $|I| = \mu^+$. Let $J \supseteq I$ be such that $|J| = \lambda$. Combining Shelah's presentation theorem with Morley's method (using that $\K$ has the $(\kappa, \mu)$-order property), we can get models $M_0 \lea M$ with $M_0 \in \K_\mu$, $M \in \K_\lambda$, such that $M$ contains a sequence $\seq{\ba_i : i \in J}$ with $\ell (\ba_i) = \kappa$ and $i_0 < j_0$, $i_1 < j_1$ implies $\gtp (\ba_{i_0} \ba_{i_1} / M_0; M) \neq \gtp (\ba_{j_1} \ba_{i_1} / M_0; M)$. Let $A := |M_0| \cup \bigcup\{\ran{\ba_i} : i \in I_0\}$ and observe that for $i < j$ both in $I$, $\gtp (\ba_i / A; M) \neq \gtp (\ba_j / A; M)$. Indeed, by density there is $k \in I_0$ such that $i < k < j$ and so by construction $\gtp (\ba_i \ba_k / M_0; M) \neq \gtp (\ba_j \ba_k / M_0; M)$. Therefore $|\gS^{\kappa} (A; M)| \ge |I| = \mu^+ > \mu$. $\dagger_{\text{Claim 3}}$

  We can now conclude the proof of Theorem \ref{op-thm}. If there were a model of size $\lambda$ with the $(\kappa, \mu)$-order property of length $\mu^+$, then Claim 2 and 3 together would contradict Claim 1. Therefore there is no such model, as desired.
\end{proof}

\section{Solvability and saturation}

In this section, we prove Theorem \ref{sat-abstract} from the abstract (the model in the categoricity cardinal is saturated). We will rely on the following local version of superstability, already implicit in \cite{shvi635} and since then studied in many papers, e.g.\ \cite{vandierennomax, gvv-mlq, indep-aec-apal, bv-sat-afml, gv-superstability-v5-toappear, vandieren-symmetry-apal}. We quote the definition from \cite[Definition 10.1]{indep-aec-apal}:

\begin{defin}\label{ss-def}
$\K$ is \emph{$\mu$-superstable} (or \emph{superstable in $\mu$})
if:

  \begin{enumerate}
    \item $\mu \ge \LS (\K)$.
    \item $\K_\mu$ is nonempty, has joint embedding, amalgamation, and no maximal models.
    \item $\K$ is stable in $\mu$.
    \item There are no long splitting chains in $\mu$:

      For any limit ordinal $\delta < \mu^+$, for every sequence $\langle M_i\mid i<\delta\rangle$ of
      models of cardinality $\mu$ with $M_{i+1}$ universal over $M_i$ and for every $p\in\gS(\bigcup_{i < \delta} M_i)$, there exists $i<\delta$ such that $p$ does not $\mu$-split over $M_i$.
\end{enumerate}
\end{defin}

We will also use the concept of symmetry for splitting isolated in \cite{vandieren-symmetry-apal}:

\begin{defin}\label{sym-def}
For $\mu \ge \LS (\K)$, we say that $\K$ has \emph{$\mu$-symmetry} (or \emph{symmetry in $\mu$}) if  whenever models $M,M_0,N\in\K_\mu$ and elements $a$ and $b$  satisfy the conditions (\ref{limit sym cond})-(\ref{last}) below, then there exists  $M^b$  a limit model over $M_0$, containing $b$, so that $\gtp(a/M^b)$ does not $\mu$-split over $N$.
\begin{enumerate} 
\item\label{limit sym cond} $M$ is universal over $M_0$ and $M_0$ is a limit model over $N$.
\item\label{a cond}  $a\in |M|\backslash |M_0|$.
\item\label{a non-split} $\gtp(a/M_0)$ is non-algebraic and does not $\mu$-split over $N$.
\item\label{last} $\gtp(b/M)$ is non-algebraic and does not $\mu$-split over $M_0$. 
\end{enumerate}
\end{defin}
\begin{remark}
  We will only use the consequences of Definitions \ref{ss-def} and \ref{sym-def}, not their exact content.
\end{remark}

By an argument of Shelah and Villaveces \cite[Theorem 2.2.1]{shvi635} (see also \cite{shvi-notes-v3-toappear}), superstability holds below the categoricity (or just semisolvability) cardinal:

\begin{fact}[The Shelah-Villaveces Theorem]\label{shvi}
  Let $\lambda > \LS (\K)$. Assume that $\K_{<\lambda}$ has amalgamation and no maximal models. If $\K$ is semisolvable in $\lambda$, then $\K$ is superstable in any $\mu \in [\LS (\K), \lambda)$.
\end{fact}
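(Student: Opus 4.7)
The plan is to verify each clause of $\mu$-superstability (Definition \ref{ss-def}) for an arbitrary $\mu \in [\LS (\K), \lambda)$. The structural clauses are quick: amalgamation and no maximal models in $\K_\mu$ are inherited from the hypothesis on $\K_{<\lambda}$; $\K_\mu$ is nonempty because a semisolvability witness $\Phi \in \Upsilon_{\LS (\K)}[\K]$ produces EM models in every cardinality $\geq \LS (\K)$; and joint embedding in $\K_\mu$ follows by extending any two $\mu$-sized models up to size $\lambda$ using no maximal models in $\K_{<\lambda}$, and then embedding the resulting pair into the common universal model $\EM_{\tau (\K)} (\lambda, \Phi)$.

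Stability in $\mu$ is essentially a corollary of Fact \ref{main-fact}, by the same reasoning as Claim 1 of Theorem \ref{op-thm}: for $|A| \le \mu$ inside $\EM_{\tau (\K)} (\lambda, \Phi)$, the bound $|\gS (A; \EM_{\tau (\K)} (\lambda, \Phi))| \le \mu$ is immediate from that claim. To convert this into stability over an arbitrary $M_0 \in \K_\mu$, suppose $p \in \gS (M_0)$ is realized by $b$ in some $\K$-extension $N$ of $M_0$. Using amalgamation together with no maximal models in $\K_{<\lambda}$, enlarge $N$ to some $N' \in \K_\lambda$ containing $M_0$ and $b$; universality of the EM model yields a $\K$-embedding $f : N' \to \EM_{\tau (\K)} (\lambda, \Phi)$, so $p$ corresponds (via the isomorphism $f \rest M_0$) to a type in $\gS (f (M_0); \EM_{\tau (\K)} (\lambda, \Phi))$. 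The map $p \mapsto f_\ast (p)$ is injective, so $|\gS (M_0)| \le \mu$.

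The remaining clause, no long splitting chains in $\mu$, is the main obstacle, and I would handle it by the standard Shelah--Villaveces argument. Assume for contradiction a chain $\langle M_i : i < \delta \rangle$ in $\K_\mu$ with $M_{i+1}$ universal over $M_i$ and $p \in \gS (\bigcup_{i<\delta} M_i)$ that $\mu$-splits over each $M_i$, with $\delta < \mu^+$ chosen minimally. The plan is to iterate the splitting witnesses along a binary tree and use the universal-over-previous property to realize each branch inside a model of size $\mu$, producing a large family of pairwise distinct Galois types over a single base model of size $\mu$ and thereby contradicting the stability just established (or, in a second form, producing too many pairwise non-isomorphic models of size $\mu^+$, which by semisolvability and universality of the EM model is impossible). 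The delicate step is verifying that distinct branches give distinct Galois types, which requires carefully tracking the isomorphisms witnessing splitting and using amalgamation in $\K_\mu$ at each stage; all the ingredients — amalgamation, no maximal models, stability, universal extensions — are available at size $\mu$, so the construction goes through exactly as in \cite{shvi635, shvi-notes-v3-toappear}.
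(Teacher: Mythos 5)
First, note that the paper does not actually prove this statement: it is quoted as a black-box Fact with citations to \cite[Theorem 2.2.1]{shvi635} and \cite{shvi-notes-v3-toappear}, so there is no internal proof to compare against. Your reconstruction of the routine clauses is fine: nonemptiness, amalgamation, no maximal models, and joint embedding in $\K_\mu$ are obtained essentially as you say, and stability in $\mu$ does follow from the type-counting inside the EM model (Claim 1 of the proof of Theorem \ref{op-thm}). One small repair: your transfer from $\gS (A; \EM_{\tau (\K)} (\lambda, \Phi))$ to $\gS (M_0)$ uses an embedding $f$ that depends on the type $p$, so ``$p \mapsto f_\ast (p)$'' is not a single injection into a fixed set of types. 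One should instead suppose there are $\mu^+$ distinct types over $M_0$, amalgamate $\mu^+$ models each realizing one of them into a single extension of $M_0$ of size at most $\lambda$ (using amalgamation in $\K_{<\lambda}$ along an increasing chain), and only then embed into the EM model to contradict the local count.

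The genuine gap is in the only clause that carries real content, namely no long splitting chains. The contradiction you propose --- a binary tree of splitting witnesses of height $\delta$ yielding ``a large family of pairwise distinct Galois types over a single base model of size $\mu$'', contradicting stability in $\mu$ --- does not go through without GCH. A tree indexed by ${}^{<\delta}2$ has $2^{<\delta}$ nodes and $2^{\delta}$ branches, so the natural common base has size $\mu + 2^{<\delta}$ (which may even reach or exceed $\lambda$), and $2^{\delta}$ need not exceed $\mu + 2^{<\delta}$; moreover the branch types naturally live over different unions along different branches, and collecting them over one common base is itself nontrivial. This cardinal-arithmetic obstruction is precisely what the GCH-plus-$\Diamond$ hypothesis of \cite{shvi635} was covering, and removing it --- which is necessary here, since Fact \ref{shvi} carries no such hypothesis --- is the actual content of \cite{shvi-notes-v3-toappear}, where the counting is played off against the EM structure (in the spirit of Fact \ref{main-fact}) rather than against raw stability in $\mu$. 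Your alternative contradiction (too many pairwise non-isomorphic models of size $\mu^+$) also does not follow from semisolvability in $\lambda$ when $\mu^+ < \lambda$, since universality of the EM model in $\lambda$ says nothing about the number of models in smaller cardinalities. Deferring to the cited sources is legitimate --- the paper does exactly that --- but the specific argument you sketch for the key step would fail as written.
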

\begin{remark}
  Here and below, we are assuming amalgamation and no maximal models but only (strictly) below $\lambda$. In at least one case ($\lambda \ge \beth_{\left(2^{\chi}\right)^+}$ where $\chi > \LS (\K)$ is a measurable cardinal \cite{kosh362, tamelc-jsl}), these assumptions are known to follow (inside $\K_{\ge \chi}$ for the example just mentioned) from categoricity in $\lambda$ but they are \emph{not} known to hold above $\lambda$.
\end{remark}

A slight improvement on \cite[Theorem 5.8]{vv-symmetry-transfer-afml} shows that failure of symmetry implies an appropriate order property:

\begin{fact}\label{op-sym-fact}
  Let $\lambda > \mu \ge \LS (\K)$. Assume that $\K$ is superstable in every $\chi \in [\mu, \lambda)$. If $\K$ does \emph{not} have $\mu$-symmetry, then $\K$ has the $(2, \mu)$-order property of length $\lambda$ (recall Definition \ref{op-def}).
\end{fact}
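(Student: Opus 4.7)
The plan is to take Theorem 5.8 of \cite{vv-symmetry-transfer-afml} as the starting point---it gives the $(2, \mu)$-order property of some short length (such as $\mu^+$) from failure of $\mu$-symmetry together with superstability in $\mu$---and bootstrap the length up to $\lambda$ using the full hypothesis that $\K$ is superstable at every $\chi \in [\mu, \lambda)$. Only the length extension is new; the passage from failure of symmetry to the short order property is already in VanDieren--Vasey.

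First I would extract from failure of $\mu$-symmetry a witness $M, M_0, N, a, b$ as in Definition \ref{sym-def} and run the VV construction inside a limit model of size $\mu$ to produce a sequence $\langle \ba_i : i < \mu^+ \rangle$ of $2$-element sequences over a base set $A$ of size $\mu$ witnessing the $(2, \mu)$-order property of length $\mu^+$. The crucial feature to preserve later is that the asymmetry of Galois types along this sequence is forced by non-$\mu$-splitting conditions over $A$.

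Next I would extend by induction on $\chi \in [\mu, \lambda)$, building an increasing continuous chain of ambient models $N_\chi \in \K_\chi$ together with a coherent witnessing sequence $\langle \ba_i : i < \chi^+ \rangle$ inside $N_\chi$ over the \emph{same} base $A$ of size $\mu$. At the inductive step from $\chi$ to $\chi^+$, superstability in $\chi$ supplies amalgamation, no maximal models, stability, and uniqueness of limit models over a fixed base; using these I would construct a universal extension of $N_\chi$ of size $\chi^+$ and, by transporting the failure-of-symmetry configuration along a $\K$-embedding of limit models, continue the sequence to length $\chi^+$ without enlarging $A$. Limit steps are unions. Taking $N := \bigcup_{\chi < \lambda} N_\chi$ then produces a model of size $\lambda$ containing a sequence of length $\lambda$ over $A$ witnessing the $(2, \mu)$-order property.

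The main obstacle is keeping the parameter set $A$ fixed at size $\mu$ throughout the induction; a naive iteration would inflate $|A|$ to $\lambda$. The point of assuming superstability at every $\chi \in [\mu, \lambda)$ rather than just at $\mu$ is precisely to provide, at each intermediate cardinal, enough structure (uniqueness of limit models and bounded non-splitting chains) to transport the original $\mu$-sized witness $(a, b)$ along the chain while preserving the non-splitting data that encode the asymmetry of types over $A$. Verifying that the asymmetry condition persists under each copying step---so that the old $A$ still distinguishes the new pairs $\gtp(\ba_{i_0} \ba_{i_1}/A; N_\chi)$ from their reverses---is the technical heart of the argument.
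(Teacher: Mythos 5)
There is a genuine gap in the bootstrapping step, and the premise on which the plan rests is also off. You assume that \cite[Theorem 5.8]{vv-symmetry-transfer-afml} only yields the order property of some short length like $\mu^+$ and that ``only the length extension is new.'' In fact the construction in that proof is a single transfinite recursion of length $\lambda$ --- that is exactly why the hypothesis is superstability at \emph{every} $\chi \in [\mu,\lambda)$ --- producing a fixed $M_0 \in \K_\mu$, a fixed pair $a,b$, an increasing continuous chain $\seq{N_\alpha : \alpha < \lambda}$, and sequences $\seq{a_\alpha : \alpha < \lambda}$, $\seq{b_\alpha : \alpha < \lambda}$ with $\gtp(ab/M_0;N_{\alpha+1}) \neq \gtp(a_\alpha b_\beta/M_0;N_{\alpha+1})$ when $\beta < \alpha$ and equality when $\beta \ge \alpha$. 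The paper's entire proof of Fact \ref{op-sym-fact} is to set $\bc_\alpha := a_\alpha b_\alpha$ and read off from these two conditions that $\seq{\bc_\alpha : \alpha < \lambda}$ over $A := |M_0|$ witnesses the $(2,\mu)$-order property of length $\lambda$. The ``slight improvement'' is in the packaging (tuples of length $2$, parameter set of size $\mu$), not in the length.

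The step that would fail in your plan is the inductive extension from a witnessing sequence of length $\chi^+$ to a longer one. To lengthen a sequence witnessing the order property you must establish the \emph{cross-term} inequalities $\gtp(\ba_i \ba_j / A; N) \neq \gtp(\ba_j \ba_i / A; N)$ for old $i$ and newly added $j$. Transporting the failure-of-symmetry configuration along a $\K$-embedding of limit models (using uniqueness of limit models over a fixed base) gives you an isomorphic copy of the old data with the same internal structure, but says nothing about how the new elements relate by Galois type to the old ones; without compactness this is precisely the hard problem, and in this paper sequence-extension is achieved only via strict indiscernibles inside EM models (Theorem \ref{op-local-thm}), which requires a semisolvability hypothesis not available in the statement of Fact \ref{op-sym-fact}. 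Your instinct that the asymmetry should be ``forced by non-splitting over $A$'' is the right one, but the way to realize it is not to concatenate sequences: it is to run one recursion of length $\lambda$ in which every pair $(a_\alpha, b_\beta)$ is compared to the single fixed pair $(a,b)$ over the single fixed $M_0$, so that the base never grows and the cross-term inequalities are built in from the start.
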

\begin{proof}
  The proof of \cite[Theorem 5.8]{vv-symmetry-transfer-afml} builds $M_0 \in \K_\mu$, elements $a, b$, an increasing continuous sequence $\seq{N_\alpha : \alpha < \lambda}$, and sequences $\seq{a_\alpha : \alpha < \lambda}$, $\seq{b_\alpha : \alpha < \lambda}$ such that for all $\alpha, \beta < \lambda$ (for the last two conditions, see the ``This is enough'' part of the proof of \cite[Theorem 5.8]{vv-symmetry-transfer-afml}):

  \begin{enumerate}
  \item $N_\alpha \in \K_{\mu + |\alpha|}$.
  \item $a, b \in |N_0|$.
  \item $M_0 \lea N_0$.
  \item $a_\alpha, b_\alpha \in |N_{\alpha + 1}|$.
  \item\label{important-1} If $\beta < \alpha$, $\gtp (a b / M_0; N_{\alpha + 1}) \neq \gtp (a_\alpha b_\beta / M_0; N_{\alpha + 1})$.
  \item\label{important-2} If $\beta \ge \alpha$, $\gtp (a b / M_0; N_{\beta + 1}) = \gtp (a_\alpha b_\beta / M_0; N_{\beta + 1})$.
  \end{enumerate}

  Let $N_\lambda := \bigcup_{\alpha < \lambda} N_\alpha$ and for $\alpha < \lambda$, let $\bc_\alpha := a_\alpha b_\alpha$. Then by (\ref{important-1}) and (\ref{important-2}), $\seq{\bc_\alpha : \alpha < \lambda}$ and the set $A := |M_0|$ witness that $N_\lambda$ (and hence $\K$) has the $(2, \mu)$-order property of length $\lambda$.
\end{proof}
\begin{remark}
  We have \emph{not} explicitly assumed amalgamation and no maximal models, as this is implied (at the relevant cardinals) by the definition of superstability.
\end{remark}

We conclude that $\mu$-symmetry follows from categoricity (or just semisolvability) in some $\lambda > \mu$. This improves on \cite[Corollary 7.2]{vv-symmetry-transfer-afml} which asked for the model of cardinality $\lambda$ to be $\mu^+$-saturated (we will see next that this saturation also follows).

\begin{cor}\label{sym-cor}
  Let $\lambda > \LS (\K)$. Assume that $\K_{<\lambda}$ has amalgamation and no maximal models. If $\K$ is semisolvable in $\lambda$, then for any $\mu \in [\LS (\K), \lambda)$, $\K$ has $\mu$-symmetry.
\end{cor}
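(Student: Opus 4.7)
The proof should be a short chaining of the three facts already in hand: Fact \ref{shvi}, Fact \ref{op-sym-fact}, and Theorem \ref{op-thm}. The plan is to argue by contradiction. Fix $\mu \in [\LS (\K), \lambda)$ and suppose $\mu$-symmetry fails. The goal is to produce both a model with the $(2, \mu)$-order property of a length that is too long, and an upper bound that forbids such lengths.

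First I would apply Fact \ref{shvi}: the hypotheses that $\K_{<\lambda}$ has amalgamation and no maximal models and that $\K$ is semisolvable in $\lambda$ give that $\K$ is superstable in every $\chi \in [\mu, \lambda)$. This is precisely what is needed to feed into Fact \ref{op-sym-fact}. Applying that fact with the standing failure of $\mu$-symmetry yields a model $N_\lambda \in \K_\lambda$ (the union of the increasing chain constructed there) witnessing the $(2, \mu)$-order property of length $\lambda$.

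Next I would invoke Theorem \ref{op-thm}, this time in the direction that gives an \emph{upper bound} on the length of the order property in any model of size $\lambda$. Taking $\kappa = 2$ and the same $\mu$, the theorem asserts that no $M \in \K_\lambda$ has the $(2, \mu)$-order property of length $\left((\mu + \LS (\K))^{2}\right)^+$. Since $\mu$ is infinite and $\mu \ge \LS (\K)$, we have $(\mu + \LS (\K))^2 = \mu$, so this forbidden length is just $\mu^+$. On the other hand $\mu < \lambda$ forces $\mu^+ \le \lambda$, so from the sequence of length $\lambda$ witnessing the order property inside $N_\lambda$ we can extract a subsequence of length $\mu^+$ still witnessing the $(2, \mu)$-order property in $N_\lambda \in \K_\lambda$. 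This contradicts Theorem \ref{op-thm}, so $\mu$-symmetry must hold.

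There is really no serious obstacle here: once Fact \ref{shvi} is used to upgrade semisolvability to superstability on the relevant interval, Facts \ref{op-sym-fact} and \ref{op-thm} sit on opposite sides of the dichotomy and collide immediately. The only thing to check carefully is the cardinal arithmetic $(\mu + \LS (\K))^2 = \mu$ and the truncation $\mu^+ \le \lambda$, both of which are immediate from $\LS (\K) \le \mu < \lambda$.
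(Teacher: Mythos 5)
Your proposal is correct and follows essentially the same route as the paper: Fact \ref{shvi} gives superstability on $[\LS(\K),\lambda)$, failure of $\mu$-symmetry plus Fact \ref{op-sym-fact} yields the $(2,\mu)$-order property of length $\lambda$, and truncating to length $\mu^+$ contradicts Theorem \ref{op-thm} with $\kappa=2$. The cardinal arithmetic check $(\mu+\LS(\K))^{2}=\mu$ is exactly the observation the paper leaves implicit.
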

\begin{proof}
  By Fact \ref{shvi}, $\K$ is superstable in any $\mu \in [\LS (\K), \lambda)$. Fix such a $\mu$. Suppose for a contradiction that $\K$ does not have $\mu$-symmetry. By Fact \ref{op-sym-fact}, $\K$ has the $(2, \mu)$-order property of length $\lambda$. In particular, $\K$ has the $(2, \mu)$-order property of length $\mu^+$. This contradicts Theorem \ref{op-thm} (where $\kappa$ there stands for $2$ here).
\end{proof}

We will make strong use of the relationship between symmetry and chains of saturated models (due to VanDieren):

\begin{fact}[Theorem 1 in \cite{vandieren-chainsat-apal}]\label{sat-sym-fact}
  If $\K$ is $\mu$-superstable, $\mu^+$-superstable, and has $\mu^+$-symmetry, then the union of any increasing chain of $\mu^+$-saturated models is $\mu^+$-saturated.
\end{fact}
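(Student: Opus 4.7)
Let $\seq{M_i : i < \delta}$ be an increasing continuous chain of $\mu^+$-saturated models and set $M_\delta := \bigcup_{i < \delta} M_i$. To prove $M_\delta$ is $\mu^+$-saturated, I would fix a subset $A \subseteq |M_\delta|$ of cardinality at most $\mu$ and a Galois type $p \in \gS(A; N)$ for some $N \gea M_\delta$, and show that $p$ is realized in $M_\delta$. Using the Löwenheim--Skolem axiom, enlarge $A$ to the universe of some $M' \lea M_\delta$ with $M' \in \K_\mu$, so it suffices to realize $p \in \gS(M')$ in $M_\delta$.

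\emph{Easy case: $\cf(\delta) > \mu$.} Then $M' \lea M_i$ for some $i < \delta$ by cofinality, and the $\mu^+$-saturation of $M_i$ yields a realization already in $M_i \lea M_\delta$.

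\emph{Hard case: $\cf(\delta) \le \mu$.} Assume without loss of generality that $\delta$ is a regular cardinal with $\delta \le \mu$. Using $\mu$-superstability, find $N \lea M'$ with $N \in \K_\mu$ over which $p$ does not $\mu$-split; by rearranging the chain, assume $N \lea M_0$. For each $i < \delta$ use $\mu^+$-saturation of $M_{i+1}$ to pick a witness $c_i \in M_{i+1}$ realizing the (unique up to the relevant equivalence) non-splitting extension of $p \rest N$ to an appropriate $\mu^+$-sized submodel of $M_i$. The plan is to build the sequence $\seq{c_i : i < \delta}$ with enough coherence that a tail (or diagonal) element realizes $p$ on all of $M'$, not only on $N$.

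The core obstacle is precisely this coherence: a priori each $c_i$ only agrees with $p$ on the $N$-part, and $M' \setminus N$ is genuinely spread across the chain since $\cf(\delta) \le |M'|$. This is where $\mu^+$-symmetry enters: at stage $i+1$, symmetry allows one to swap the roles of the realizing element $c_i$ and the new parameters coming from $M' \cap M_{i+1}$, producing a limit model over $M_i$ that contains $c_i$ and over which the non-$\mu^+$-splitting extension of $p \rest N$ remains well-behaved. Iterating this back-and-forth up the chain forces the sequence to cohere and delivers a realization of $p$ inside $M_\delta$. If the direct construction resists, an alternative strategy I would pursue is to first show, using $\mu$- and $\mu^+$-superstability, that every $\mu^+$-saturated model in this context is a $(\mu^+, \sigma)$-limit model for some regular $\sigma$, then use $\mu^+$-symmetry to argue that unions of such limits remain limits (handling the awkward cofinalities that were the source of the difficulty in the first place), and conclude from the reverse implication.
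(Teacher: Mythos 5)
This statement is not proved in the paper at all: it is imported as a black box (Theorem 1 of VanDieren's paper on chains of saturated models), so there is no internal proof to compare yours against. Judged on its own terms, your proposal gets the setup right --- the reduction to realizing a type over a model $M' \in \K_\mu$, the easy case $\cf{\delta} > \mu$, and the correct identification of where the content lies: when $\cf{\delta} \le \mu$ the parameter model $M'$ is spread across the chain, so no single $M_i$ sees all of it and saturation of the $M_i$ cannot be applied directly.

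But at exactly that point the argument stops being a proof. The sentence ``iterating this back-and-forth up the chain forces the sequence to cohere and delivers a realization of $p$'' \emph{is} the theorem; nothing in the proposal explains how an application of $\mu^+$-symmetry at stage $i+1$ (which, per Definition \ref{sym-def}, produces a limit model $M^b$ over which a certain type does not split) gets converted into agreement of $c_i$ with $p$ over the portion of $M'$ visible by stage $i+1$, nor why the sequence $\seq{c_i : i < \delta}$ stabilizes or admits a diagonal realization inside $M_\delta$. There are also technical slips: $M' \cap M_{i+1}$ need not be a $\K$-substructure, so one cannot speak of the non-splitting extension of $p \rest N$ to it or invoke saturation of $M_{i+1}$ over it without first interpolating a genuine model; and the ``appropriate $\mu^+$-sized submodel'' should be $\mu$-sized for $\mu$-splitting and $\mu^+$-saturation to be applicable. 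Your fallback strategy --- characterize $\mu^+$-saturated models as limit models, use $\mu^+$-symmetry to obtain continuity of reduced towers and uniqueness of limit models, and deduce that the union is again such a limit --- is in fact the route the cited proof actually takes, but as written it is a plan rather than an argument. In short: the skeleton is right, and the key step is missing.
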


We have arrived to Theorem \ref{sat-abstract} from the abstract. We first prove a lemma:

\begin{lem}\label{saturated-lem}
  Let $\lambda > \LS (\K)$. If for every $\mu \in [\LS (\K), \lambda)$, $\K$ is $\mu$-superstable and has $\mu$-symmetry, then $\K$ has a saturated model of cardinality $\lambda$.
\end{lem}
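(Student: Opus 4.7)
The plan is to split into two cases by whether $\lambda$ is a successor or a limit cardinal, constructing the saturated model directly in the successor case and as the union of a cofinal chain of smaller saturated models in the limit case.

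\textbf{Case 1: $\lambda = \chi^+$ for some $\chi \ge \LS (\K)$.} By $\chi$-superstability $\K$ is stable in $\chi$, so I would build an increasing continuous chain $\seq{M_\alpha : \alpha < \chi^+}$ in $\K_\chi$ where $M_{\alpha+1}$ realizes every type in $\gS (M_\alpha)$ (at most $\chi$ such by stability). Since $\chi^+$ is regular, any $M' \lea \bigcup_\alpha M_\alpha$ of size $\le \chi$ is contained in some $M_\alpha$, and hence every $p \in \gS (M')$ extends to a type over $M_\alpha$ realized in $M_{\alpha+1}$. Thus $\bigcup_\alpha M_\alpha$ is saturated of cardinality $\chi^+ = \lambda$.

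\textbf{Case 2: $\lambda$ is a limit cardinal.} Pick a strictly increasing cofinal sequence $\seq{\mu_i : i < \cf{\lambda}}$ of successor cardinals in $[\LS (\K)^+, \lambda)$. By Case 1 applied with $\lambda$ replaced by each $\mu_i$, each $\K_{\mu_i}$ contains a saturated model; regularity of $\mu_i$ upgrades saturation to universality in $\K_{\le \mu_i}$ via a standard forth argument. I build an increasing chain $\seq{M_i : i < \cf{\lambda}}$ with $M_i$ saturated in $\K_{\mu_i}$ as follows: at successor $i$, extend $M_i$ to a saturated $M_{i+1} \in \K_{\mu_{i+1}}$ using universality of the latter; at limit $i$, the union $\bigcup_{j < i} M_j$ has size $\le \mu_i$ and so embeds into a saturated model of $\K_{\mu_i}$, which I take as $M_i$ after renaming to preserve the chain. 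Set $M := \bigcup_{i < \cf{\lambda}} M_i$, so $\|M\| = \lambda$.

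To see that $M$ is saturated it suffices to show $M$ is $\chi^+$-saturated for every $\chi \in [\LS (\K), \lambda)$. Since $\lambda$ is a limit cardinal, $\chi^+$ is also below $\lambda$, so the hypotheses supply $\chi$-superstability, $\chi^+$-superstability, and $\chi^+$-symmetry; Fact \ref{sat-sym-fact} then guarantees that unions of chains of $\chi^+$-saturated models are $\chi^+$-saturated. Choosing $i_0 < \cf{\lambda}$ with $\mu_{i_0} > \chi^+$, the tail $\seq{M_i : i_0 \le i < \cf{\lambda}}$ consists of $\chi^+$-saturated models and has union $M$, so $M$ is $\chi^+$-saturated. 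The main obstacle is exactly this step: Fact \ref{sat-sym-fact} must be invoked for every $\chi < \lambda$, which requires $\chi^+ < \lambda$; this forces $\lambda$ to be a limit cardinal in Case 2 and explains why the successor case needs a separate direct iteration argument.
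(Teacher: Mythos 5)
Your proof is correct and follows essentially the same route as the paper: split on whether $\lambda$ is a successor (build the saturated model directly from stability) or a limit (take a union along a cofinal chain of length $\cf{\lambda}$ and invoke Fact \ref{sat-sym-fact} to handle the possibly short cofinality). The only cosmetic difference is that you chain saturated models of increasing cardinalities $\mu_i < \lambda$ and apply Fact \ref{sat-sym-fact} once at the end, whereas the paper chains $\mu_i^+$-saturated models of cardinality $\lambda$; both come to the same thing.
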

\begin{proof}
  If $\lambda$ is a successor, then we can build the desired model using stability below $\lambda$, so assume that $\lambda$ is limit.

    Let $\delta := \cf{\lambda}$. Fix an increasing sequence $\seq{\mu_i : i < \delta}$ cofinal in $\lambda$ such that $\LS (\K) \le \mu_0$. We build an increasing chain $\seq{M_i : i < \delta}$ in $\K_\lambda$ such that for all $i < \delta$, $M_i$ is $\mu_i^+$-saturated. This is enough since then it is easy to check that $\bigcup_{i < \delta} M_i$ is saturated. This is possible: Using Fact \ref{sat-sym-fact}, for any $i < \delta$, any union of an increasing chain of $\mu_i^+$-saturated models is $\mu_i^+$-saturated (note that $\mu_i^+ < \lambda$ as $\lambda$ is limit). Thus it is straightforward to carry out the construction.  
\end{proof}

\begin{cor}\label{solvable-saturated-cor}
  Let $\lambda > \LS (\K)$. Assume that $\K_{<\lambda}$ has amalgamation and no maximal models.

  \begin{enumerate}
  \item\label{solvable-sat-1} If $\K$ is semisolvable in $\lambda$, then $\K$ has a saturated model of cardinality $\lambda$.
  \item\label{solvable-sat-2} If $\Phi$ is an EM blueprint witnessing that $\K$ is solvable in $\lambda$, then for any linear order $I$ of cardinality $\lambda$, $\EM_{\tau (\K)} (I, \Phi)$ is saturated.
  \item\label{solvable-sat-3} If $\K$ has arbitrarily large models and is categorical in $\lambda$, then the model of cardinality $\lambda$ is saturated.
  \end{enumerate}
\end{cor}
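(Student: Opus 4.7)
The plan is to prove parts (\ref{solvable-sat-1}), (\ref{solvable-sat-2}), and (\ref{solvable-sat-3}) in order, each feeding into the next.

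For part (\ref{solvable-sat-1}), I would simply assemble a pipeline of earlier results. Semisolvability in $\lambda$ gives, via Fact \ref{shvi}, that $\K$ is $\mu$-superstable for every $\mu \in [\LS (\K), \lambda)$; Corollary \ref{sym-cor} then supplies $\mu$-symmetry throughout the same range. Both conclusions are exactly the hypotheses of Lemma \ref{saturated-lem}, which produces a saturated model in $\K_\lambda$.

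For part (\ref{solvable-sat-2}), let $\Phi$ witness $\lambda$-solvability and set $N := \EM_{\tau (\K)} (I, \Phi)$. Since solvability entails semisolvability (Remark \ref{solv-rmk}), part (\ref{solvable-sat-1}) provides a saturated $M^* \in \K_\lambda$. My approach is a back-and-forth between $M^*$ and $N$: build an increasing continuous chain $\seq{P_n : n \le \omega}$ in $\K_\lambda$ with $P_{2n} \cong M^*$ and $P_{2n+1} \cong N$, using the universality of the target model at each step (superlimit universality for $N$, saturation for $M^*$). Applying the superlimit property of $N$ to the reindexed sub-chain of $N$-copies gives $P_\omega \cong N$. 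Simultaneously, $P_\omega$ is the union of the $\omega$-chain of saturated copies $\seq{P_{2n} : n < \omega}$, so Fact \ref{sat-sym-fact} applied at each $\mu$ with $\mu^+ < \lambda$ (its hypotheses furnished by Fact \ref{shvi} and Corollary \ref{sym-cor}) shows $P_\omega$ is $\mu^+$-saturated for every such $\mu^+$. When $\lambda$ is a limit cardinal this yields full $\lambda$-saturation, so by uniqueness of the saturated model in $\K_\lambda$ (a standard back-and-forth using amalgamation below $\lambda$) one gets $P_\omega \cong M^*$; combining with $P_\omega \cong N$ then gives $N \cong M^*$, so $N$ is saturated. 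For part (\ref{solvable-sat-3}), Remark \ref{solv-rmk} says that categoricity in $\lambda$ forces $\lambda$-solvability, so part (\ref{solvable-sat-2}) applies and the unique model of cardinality $\lambda$ is saturated.

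The main obstacle is the successor case $\lambda = \mu_0^+$ of part (\ref{solvable-sat-2}): there Fact \ref{sat-sym-fact} only yields $\mu^+$-saturation of $P_\omega$ for $\mu^+ \le \mu_0$, which falls short of the $\mu_0^+$-saturation required for $P_\omega \cong M^*$. I would treat this case separately, most likely by invoking the successor branch of the construction inside Lemma \ref{saturated-lem} (where $M^*$ is built explicitly from $\mu_0$-stability as a limit of $\mu_0$-sized models) and recognizing the $\omega$-chain of copies of $M^*$ as sitting inside a longer chain of $\mu_0$-sized models to which the limit-model closure theory already applies; alternatively, a cofinality argument on the chain combined with $\mu_0$-stability may let any $\mu_0$-sized parameter set be absorbed into the appropriate stage of the back-and-forth.
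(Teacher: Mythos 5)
Parts (\ref{solvable-sat-1}) and (\ref{solvable-sat-3}) of your proposal are correct and match the paper. The problem is the successor case of part (\ref{solvable-sat-2}), which you correctly flag as the main obstacle but do not actually close. When $\lambda = \mu_0^+$, your chain has length $\omega$, so a submodel of $P_\omega$ of cardinality $\mu_0 \ge \LS(\K) \ge \aleph_0$ need not be contained in any single stage $P_{2n}$; hence saturation of the stages gives you nothing directly, and the only tool available for pushing saturation through unions of chains is Fact \ref{sat-sym-fact}, which at the level $\mu_0^+ = \lambda$ requires $\mu_0^+$-superstability and $\mu_0^+$-symmetry. Fact \ref{shvi} and Corollary \ref{sym-cor} only supply these for $\mu \in [\LS(\K), \lambda)$, i.e.\ up to $\mu_0$ and not at $\lambda$ itself. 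Neither of your suggested workarounds repairs this: the ``limit-model closure theory'' at $\mu_0$ concerns models of cardinality $\mu_0$, not unions of $\mu_0^+$-sized saturated models, and no cofinality argument can absorb a $\mu_0$-sized parameter set into a single stage of a chain of cofinality $\omega \le \mu_0$.

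The fix is to run your interleaving construction along a chain of length $\lambda$ rather than $\omega$, which is what the paper does: build increasing continuous $\seq{M_i : i \le \lambda}$, $\seq{N_i : i \le \lambda}$ with $M_i \cong N$ (the superlimit EM model), $M_i \lea N_i \lea M_{i+1}$, and $N_{i+1}$ saturated. The superlimit property applies to chains of any limit length $\delta < \lambda^+$, so $N \cong M_\lambda = N_\lambda$ still holds; and now in the successor case $\lambda = \mu_0^+$ is regular, so every submodel of $N_\lambda$ of cardinality $\le \mu_0$ sits inside some $N_{i+1}$, which is saturated, and $N_\lambda$ is saturated with no appeal to Fact \ref{sat-sym-fact} at $\lambda$. (In the limit case the paper argues as you do.) Note also that with this route one concludes directly from $N \cong N_\lambda$ and saturation of $N_\lambda$, without needing the detour through uniqueness of the saturated model.
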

\begin{proof} \
  \begin{enumerate}
  \item By Fact \ref{shvi} and Corollary \ref{sym-cor}, $\K$ is superstable and has symmetry in any $\mu \in [\LS (\K), \lambda)$. Now apply Lemma \ref{saturated-lem}.
  \item We show more generally that if $\K$ is semisolvable in $\lambda$ and $M$ is superlimit in $\lambda$, then $M$ is saturated. We build increasing continuous chains $\seq{M_i : i \le \lambda}$, $\seq{N_i : i \le \lambda}$ in $\K_\lambda$ such that for any $i < \lambda$:

  \begin{enumerate}
  \item $M_i \cong M$.
  \item $M_i \lea N_i \lea M_{i + 1}$.
  \item $N_{i + 1}$ is saturated.
  \end{enumerate}

  This is possible by the first part (noting that the saturated model must be universal). This is enough: because $M$ is superlimit, $M \cong M_\lambda = N_\lambda$. Further, $N_\lambda$ must be saturated: if $\lambda$ is a successor this is clear and if $\lambda$ is limit this is because for any $\mu < \lambda$ the union of any increasing chain of $\mu$-saturated models is $\mu$-saturated. Since $N_\lambda$ is saturated, $M$ is also saturated, as desired.
\item By Remark \ref{solv-rmk}, $\K$ is solvable in $\lambda$, so apply the previous part.
  \end{enumerate}
\end{proof}

\section{Applications}\label{applications-sec}

\subsection{Solvability transfers}

We can now prove Corollary \ref{solv-abstract} from the abstract.

\begin{cor}[Downward solvability transfer]\label{solv-downward}
  Let $\lambda > \LS (\K)$. Assume that $\K_{<\lambda}$ has amalgamation and no maximal models. If $\K$ is solvable in $\lambda$, then there exists an EM blueprint $\Psi$ which witnesses that $\K$ is solvable in $\mu$ for any $\mu \in (\LS (\K), \lambda]$.
\end{cor}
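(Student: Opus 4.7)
The plan is to take $\Psi := \Phi$ and verify that the same EM blueprint witnesses $(\mu, \LS(\K))$-solvability for every $\mu \in (\LS(\K), \lambda]$. By Fact \ref{shvi} and Corollary \ref{sym-cor}, $\K$ is $\chi$-superstable and has $\chi$-symmetry for every $\chi \in [\LS(\K), \lambda)$; applying Lemma \ref{saturated-lem} at $\mu$ in place of $\lambda$ gives a saturated model $M^*_\mu$ of cardinality $\mu$ for every $\mu \in (\LS(\K), \lambda]$, and running the chain argument from the proof of Corollary \ref{solvable-saturated-cor}.(\ref{solvable-sat-2}) at $\mu$ (with Fact \ref{sat-sym-fact} closing chains of saturated models under unions) shows that $M^*_\mu$ is superlimit. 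Since superlimit models are unique (Remark \ref{superlimit-uq}), the task reduces to proving, for every linear order $I$ of cardinality $\mu \in (\LS(\K), \lambda]$, that $\EM_{\tau(\K)}(I, \Phi) \cong M^*_\mu$, equivalently that $\EM_{\tau(\K)}(I, \Phi)$ is saturated of cardinality $\mu$.

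I would prove this by induction on $\mu$. The case $\mu = \lambda$ is Corollary \ref{solvable-saturated-cor}.(\ref{solvable-sat-2}). For the inductive step at $\mu \in (\LS(\K)^+, \lambda)$, decompose $I = \bigcup_{\alpha < \cf{\mu}} I_\alpha$ as a continuous increasing union of sub-linear-orders with $\LS(\K) < |I_\alpha| < \mu$; each $\EM_{\tau(\K)}(I_\alpha, \Phi)$ is saturated of size $|I_\alpha|$ by the inductive hypothesis, and Fact \ref{sat-sym-fact} applied at each $\chi \in [\LS(\K), \mu)$ then shows that the chain union $\EM_{\tau(\K)}(I, \Phi)$ is $\chi^+$-saturated for every $\chi < \mu$, hence saturated of cardinality $\mu$. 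The base case $\mu = \LS(\K)^+$, where the chain decomposition degenerates (its pieces have size at most $\LS(\K)$), I would handle directly: given $M \lea \EM_{\tau(\K)}(I, \Phi)$ with parameters in $I_0 \subseteq I$ of size at most $\LS(\K)$ and $p \in \gS(M)$, extend $I$ to a linear order $J$ of cardinality $\lambda$ by inserting the $\lambda$ new elements inside a single $I_0$-cut of $I$ that contains $\LS(\K)^+$-many $I$-realizers; Corollary \ref{solvable-saturated-cor}.(\ref{solvable-sat-2}) gives a realization $\rho(j_1, \ldots, j_n)$ of $p$ in $\EM_{\tau(\K)}(J, \Phi)$, and the order-automorphism of $J$ fixing $I_0$ pointwise and mapping the $j_k$'s into $I$ induces a $\tau(\K)$-automorphism of $\EM_{\tau(\K)}(J, \Phi)$ fixing $M$ pointwise and taking the realization into $\EM_{\tau(\K)}(I, \Phi)$.

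The main obstacle is the base case: ensuring that one can always find a single $I_0$-cut containing $\LS(\K)^+$-many $I$-elements, into which the $\lambda$ new elements of $J$ (and the image of any repeated-cut tuple) can be absorbed. For "nice" $I$ (e.g. $I = \LS(\K)^+$) this is straightforward by pigeonhole, but for arbitrary $I$ of cardinality $\LS(\K)^+$ the elements of $I \setminus I_0$ could in principle distribute themselves over up to $2^{\LS(\K)}$ distinct cuts. I expect to handle this either by preemptively enlarging $I_0$ inside $I$ to coarsen the cut structure without disturbing $M$ or $p$, or by invoking the uniqueness of limit models in $\LS(\K)^+$ — one of the corollaries of the main theorem of the paper mentioned in the abstract — to identify $\EM_{\tau(\K)}(I, \Phi)$ directly with the unique saturated/limit model of that cardinality.
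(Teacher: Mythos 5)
Your proposal diverges from the paper at the very first step, and the divergence is where the gap lies. You set $\Psi := \Phi$ and try to prove that $\EM_{\tau(\K)}(I,\Phi)$ is saturated for \emph{every} linear order $I$ of cardinality $\mu \in (\LS(\K),\lambda]$. But solvability in $\lambda$ only controls the EM models of $\Phi$ over linear orders of cardinality $\lambda$; over a small $I$ the model $\EM_{\tau(\K)}(I,\Phi)$ is not known to be saturated, nor even universal or limit in its own cardinality. The paper does not claim this for $\Phi$: it invokes Shelah's \cite[Subfact 6.8]{sh394} to manufacture a \emph{new} blueprint $\Psi$ (essentially by composing $\Phi$ with blueprints for linear orders, so that $\EM_{\tau(\K)}(I,\Psi) = \EM_{\tau(\K)}(J,\Phi)$ for a suitably rich $J$ of size $\lambda$... wait, of size $|I|$ arranged to be ``fat'' enough), and it is $\Psi$, not $\Phi$, whose small EM models are saturated. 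The fact that the paper routes through this device is a strong signal that the statement you are trying to prove for $\Phi$ itself is not accessible by soft arguments, and may simply be false.

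Concretely, the break is exactly where you flag it: the base case $\mu = \LS(\K)^+$. The retraction you want --- an order-map of $J$ fixing $I_0$ pointwise and pushing the finitely many generators $j_1,\dots,j_n$ of the realizer into $I$ --- requires each $j_k \notin I$ to sit in an $I_0$-cut (indeed, in a cut relative to $I_0$ together with the $j_k$'s you keep fixed) that contains an element of $I$; for an arbitrary $I$ of cardinality $\LS(\K)^+$ there is no way to arrange in advance that the saturated model $\EM_{\tau(\K)}(J,\Phi)$ realizes $p$ using only indices in the one cut you pre-enlarged, and if you must consider arbitrary realizers the needed cut condition can fail (note also that for well-ordered $J$ the automorphism group is trivial, so only embeddings, not automorphisms, are available). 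Your fallback via uniqueness of limit models does not close the gap either: to apply it you would need to know that $\EM_{\tau(\K)}(I,\Phi)$ \emph{is} a limit model (or at least universal) in $\LS(\K)^+$, which is again something $\Phi$ does not provide over small index orders. The inductive step and the final ``saturated $\Rightarrow$ superlimit'' argument are fine and match the paper's last paragraph, but the proof as proposed is incomplete without either importing \cite[Subfact 6.8]{sh394} (and thereby changing the blueprint, as the paper does) or supplying a genuinely new argument for the base case.
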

\begin{proof}
  Let $\Phi$ be an EM blueprint witnessing that $\K$ is solvable in $\lambda$. By Corollary \ref{solvable-saturated-cor}, $\EM_{\tau (\K)} (J, \Phi)$ is saturated for any linear order $J$ of cardinality $\lambda$. We now use \cite[Subfact 6.8]{sh394} (a full proof is given in \cite{sh394-updated}, the online version of \cite{sh394}). It says that there exists an EM blueprint $\Psi \in \Upsilon_{\LS (\K)}[\K]$ such that:
  \begin{enumerate}
  \item For any linear order $I$ there exists a linear order $J$ with, $\EM_{\tau (\K)} (I, \Psi) = \EM_{\tau (\K)} (J, \Phi)$. In particular, $\Psi$ still witnesses that $\K$ is solvable in $\lambda$.
  \item For any $\mu \in (\LS (\K), \lambda]$ and any linear order $I$ of cardinality $\mu$, $\EM_{\tau (\K)} (I, \Psi)$ is saturated.
  \end{enumerate}

  By Fact \ref{shvi} and Corollary \ref{sym-cor}, $\K$ is superstable and has symmetry in every $\mu \in [\LS (\K), \lambda)$. Now let $\mu \in (\LS (\K), \lambda]$. We want to see that $\Psi$ witnesses solvability in $\mu$. By the above, $\Psi$ witnesses solvability in $\lambda$, so assume that $\mu < \lambda$. Using Fact \ref{sat-sym-fact} it is straightforward to see that the union of any increasing chain of $\mu$-saturated models will be $\mu$-saturated. In other words, the saturated model of cardinality $\mu$ is superlimit and therefore $\Psi$ witnesses that $\K$ is solvable in $\mu$.
\end{proof}
\begin{remark}
  It is natural to ask what happens if $\mu = \LS (\K)$. In that case, if $\Psi$ witnesses solvability in $\LS (\K)^+$ we can find a linear order $J$ of size $\LS (\K)$ such that $\EM_{\tau (\K)} (J, \Psi)$ is limit (see the proof of \cite[Lemma I.6.3]{sh394}). This implies that $\EM_{\tau (\K)} (I \times J, \Psi)$ is limit for any linear order $I$ of size at most $\LS (\K)$ (here $I \times J$ is ordered with the lexicographical ordering). The class of linear orders of the form $I \times J$ is an AEC with arbitrarily large models and hence has an EM blueprint. Composing this blueprint with $\Psi$, we can find a blueprint $\Psi'$ such that $\EM_{\tau (\K)} (I, \Psi') = \EM_{\tau (\K)} (I \times J, \Psi)$ for any linear order $I$. In particular, $\Psi'$ also witnesses solvability in $(\LS (\K), \lambda]$. Moreover, $\EM_{\tau (\K)} (I, \Psi')$ is limit for any linear order $I$ of cardinality $\LS (\K)$. This implies that $\Psi'$ witnesses \emph{semisolvability} in $\LS (\K)$, but it is not clear that the limit model is superlimit (even though it is unique), see \cite[Question 6.12]{vv-symmetry-transfer-afml}. Therefore we do not know if $\Psi'$ witnesses solvability in $\LS (\K)$, but it will if there is any superlimit in $\LS (\K)$.
\end{remark}

Assuming tameness, we can also get an upward transfer. Note that here only semisolvability is assumed so also the downward part of Corollary \ref{upward-solv} is interesting.

\begin{cor}\label{upward-solv}
  Assume that $\K$ is $\LS (\K)$-tame and has amalgamation and no maximal models. Write $\mu_0 := \left(\beth_{\omega + 2} (\LS (\K))\right)^+$. If $\K$ is semisolvable in $\lambda$ for \emph{some} $\lambda > 2^{\LS (\K)}$, then $\K$ is $(\mu, \mu_0)$-solvable for \emph{all} $\mu \ge \mu_0$.
\end{cor}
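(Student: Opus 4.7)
The plan is to upgrade semisolvability in some $\lambda > 2^{\LS(\K)}$ into superstability and symmetry in every cardinal $\mu \ge \LS(\K)$ using tameness, deduce that the saturated model in each such cardinal is superlimit, and finally extract a single EM blueprint in $\Upsilon_{\mu_0}[\K]$ whose EM models of size $\ge \mu_0$ are saturated.

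First I would apply Fact \ref{shvi} together with Corollary \ref{sym-cor} to conclude that $\K$ is $\mu$-superstable and has $\mu$-symmetry for every $\mu \in [\LS(\K), \lambda)$. Since $\lambda > 2^{\LS(\K)}$ and we have \emph{global} amalgamation and no maximal models (unlike the situation below $\lambda$ in Corollary \ref{solv-downward}), the $\LS(\K)$-tameness hypothesis activates the author's upward transfer theorems for superstability and for symmetry (the same machinery invoked in \cite{gv-superstability-v5-toappear} and \cite{vv-symmetry-transfer-afml}). This upgrades both properties to every $\mu \ge \LS(\K)$.

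Next, Fact \ref{sat-sym-fact} applied at each $\mu$ yields that unions of increasing chains of $\mu$-saturated models are $\mu$-saturated. Together with stability and no maximal models, this lets me verify — essentially by the argument in Corollary \ref{solvable-saturated-cor}.(\ref{solvable-sat-2}) — that the (unique) saturated model of cardinality $\mu$ is superlimit for every $\mu > \LS(\K)$.

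Finally, I would produce the EM blueprint: using Shelah's technique (the same \cite[Subfact 6.8]{sh394} cited in the proof of Corollary \ref{solv-downward}, or its analog compressed to the $\mu_0$-vocabulary level), I would build $\Psi \in \Upsilon_{\mu_0}[\K]$ such that for every linear order $I$ with $|I| \ge \mu_0$, $\EM_{\tau(\K)}(I, \Psi)$ is saturated. The threshold $\mu_0 = (\beth_{\omega+2}(\LS(\K)))^+$ is the Hanf-number level at which the relevant tame-AEC machinery (in particular the existence of the required good frames and their associated blueprints, as developed in the author's earlier papers) kicks in uniformly. Since saturated implies superlimit at every $\mu \ge \mu_0$ by the previous step, $\Psi$ witnesses $(\mu, \mu_0)$-solvability for every $\mu \ge \mu_0$.

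The hard part is the last step: producing a \emph{single} blueprint whose EM models are saturated across an unbounded range of cardinalities. The downward transfer in Corollary \ref{solv-downward} only had to handle cardinalities $\le \lambda$ and piggy-backed on an already-fixed blueprint at $\lambda$; here the blueprint must work uniformly above $\mu_0$, which is precisely what forces the $\beth_{\omega+2}$-threshold and requires the tame superstability theory rather than just the extraction-of-strict-indiscernibles argument that drove Sections~2–4.
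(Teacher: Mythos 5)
There is a genuine gap: your proposal never invokes Theorem \ref{op-thm}, which is the one new ingredient the paper's proof actually supplies, and as a consequence you cannot justify the threshold $\mu_0 = \left(\beth_{\omega+2}(\LS(\K))\right)^+$. The paper's proof is a one-parameter modification of \cite[Theorem 5.4]{gv-superstability-v5-toappear}: the entire machinery there (upward transfer of superstability via tameness, construction of the witnessing blueprint) already exists as a black box, and the old threshold of roughly $\beth_{\left(2^{\LS(\K)}\right)^+}$ arose solely from the need for a cardinal $\chi_0$ such that $\K$ does not have the $\LS(\K)$-order property of length $\chi_0$ --- which, absent better information, one can only obtain at the Hanf number. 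Theorem \ref{op-thm} (applied with $\kappa$ there equal to $\LS(\K)$ and $\mu$ there equal to $0$, using semisolvability in $\lambda > 2^{\LS(\K)}$) shows the $\LS(\K)$-order property already fails at length $\left(2^{\LS(\K)}\right)^+$, and feeding this much smaller $\chi_0$ into \cite[Section 4]{gv-superstability-v5-toappear} is precisely what produces $\mu_0$.

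Your justification of the threshold --- that $\left(\beth_{\omega+2}(\LS(\K))\right)^+$ is ``the Hanf-number level at which the relevant tame-AEC machinery kicks in uniformly'' --- is not merely vague but points the wrong way: the Hanf-number level is $\beth_{\left(2^{\LS(\K)}\right)^+}$, which is the \emph{old} threshold that this corollary is improving on (see the remark following the statement). Run your argument as written and the blueprint-construction step (which you correctly flag as the hard part but defer to ``Shelah's technique'') only delivers solvability above the old bound. The rest of your outline (Fact \ref{shvi} plus Corollary \ref{sym-cor} below $\lambda$, tameness to transfer upward, saturated implies superlimit via Fact \ref{sat-sym-fact}) is a reasonable sketch of the \emph{cited} machinery, but it is exactly the part the paper does not reprove; the missing idea is the improved bound on the length at which the order property fails.
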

\begin{remark}
  This improves on the threshold from \cite[Theorem 5.4]{gv-superstability-v5-toappear}: there $\mu_0$ was around $\beth_{\left(2^{\LS (\K)}\right)^+}$. We quote freely from there in the proof.
\end{remark}
\begin{remark}
  In the conclusion, the same blueprint will witness $(\mu, \mu_0)$-solvability for all $\mu$. 
\end{remark}
\begin{proof}[Proof of Corollary \ref{upward-solv}]
  In the proof of \cite[Theorem 5.4]{gv-superstability-v5-toappear}, the only reason for the threshold to be around $\beth_{\left(2^{\LS (\K)}\right)^+}$ was a bound on a cardinal $\chi_0$ so that $\K$ does not have the $\LS (\K)$-order property of length $\chi_0$. Now using Theorem \ref{op-thm}, we get that $\K$ does not have the $\LS (\K)$-order property of length $\left(2^{\LS (\K)}\right)^+$. Following \cite[Section 4]{gv-superstability-v5-toappear}, we obtain that $\K$ is $(\mu, \mu_0)$-solvable for all $\mu \ge \mu_0$.
\end{proof}

\subsection{Structure of categorical AECs with amalgamation}

Directly from existing results and Corollary \ref{solvable-saturated-cor}, we obtain a good understanding of the structure below the categoricity cardinal of an AEC with amalgamation. For the convenience of the reader, we have added a few statements that we have already proven. We quote freely and refer the reader to the sources for more motivation on the results. We will use the following notation from \cite[Chapter 14]{baldwinbook09}:

\begin{notation}\label{hanf-def}
  $H_1 := \beth_{\left(2^{\LS (\K)}\right)^+}$.
\end{notation}

\begin{cor}\label{big-cor}
  Let $\lambda > \LS (\K)$. Assume that $\K_{<\lambda}$ has amalgamation and no maximal models. If $\K$ is semisolvable in $\lambda$, then:

  \begin{enumerate}
  \item\label{big-cor-1} For any $\mu \in [\LS (\K), \lambda)$, $\K$ is $\mu$-superstable and has $\mu$-symmetry.
  \item\label{big-cor-2} For any $\mu \in [\LS (\K), \lambda)$, any $M_0, M_1, M_2 \in \K_{\mu}$, if $M_1$ and $M_2$ are limit over $M_0$, then $M_1 \cong_{M_0} M_2$.
  \item\label{big-cor-3} For any $\mu \in [\LS (\K), \lambda)$, the union of any increasing chain of $\mu$-saturated models is $\mu$-saturated.
  \item\label{big-cor-4} If $\K$ is solvable in $\lambda$, then there exists an EM blueprint $\Psi \in \Upsilon_{\LS (\K)}[\K]$ such that $\EM_{\tau (\K)} (I, \Psi)$ is saturated for any linear order $I$ of cardinality in $(\LS (\K), \lambda]$.
  \item\label{big-cor-5} If $\K$ is solvable in $\lambda$ and either $\cf{\lambda} > \LS (\K)$ or $\lambda \ge H_1$, then there exists $\chi < \min (\lambda, H_1)$ such that:
    \begin{enumerate}
      \item\label{big-cor-5a} $\K$ is $(\chi, <\lambda)$-weakly tame.
      \item\label{big-cor-5b} For any $\mu \in (\chi, \lambda)$, there is a type-full good $\mu$-frame with underlying AEC the saturated models in $\K_{\mu}$.
    \end{enumerate}
  \end{enumerate}
\end{cor}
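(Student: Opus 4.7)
The plan is to assemble the machinery already developed in the paper, plus a few standard results from the literature. Parts (\ref{big-cor-1})--(\ref{big-cor-4}) are essentially bookkeeping.

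For (\ref{big-cor-1}), Fact \ref{shvi} yields $\mu$-superstability for every $\mu \in [\LS (\K), \lambda)$, and Corollary \ref{sym-cor} supplies $\mu$-symmetry in the same range. Part (\ref{big-cor-2}) then follows from a theorem of VanDieren (this was one of the main motivations for isolating symmetry in \cite{vandieren-symmetry-apal}): $\mu$-superstability together with $\mu$-symmetry gives uniqueness of limit models of cardinality $\mu$, and both hypotheses are supplied by (\ref{big-cor-1}). For (\ref{big-cor-3}), given $\mu = \nu^+$ with $\nu \ge \LS (\K)$, apply Fact \ref{sat-sym-fact} using $\nu$-superstability, $\mu$-superstability, and $\mu$-symmetry (all from (\ref{big-cor-1})); the remaining limit cases reduce to the successor case since a $\mu$-saturated model is automatically $\chi^+$-saturated for every $\chi < \mu$. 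Part (\ref{big-cor-4}) is essentially a restatement of what the proof of Corollary \ref{solv-downward} produces: the blueprint $\Psi$ constructed there witnesses solvability in every $\mu \in (\LS (\K), \lambda]$, and Corollary \ref{solvable-saturated-cor}.(\ref{solvable-sat-2}) identifies the corresponding EM models as saturated for every such $\mu$ and every linear order $I$ of that cardinality.

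Part (\ref{big-cor-5}) is the main obstacle and carries the real content. The plan is to combine our new saturation result (Corollary \ref{solvable-saturated-cor}.(\ref{solvable-sat-3})) with the hypothesis on $\cf (\lambda)$ or $\lambda \ge H_1$ to trigger a standard extraction argument (going back to \cite{sh394}, reworked in \cite{baldwinbook09}, and refined in \cite{bv-sat-afml}) producing $\chi < \min (\lambda, H_1)$ for which $\K$ is $(\chi, <\lambda)$-weakly tame. The idea is to compare Galois types over a small model by realizing them inside the saturated model of size $\lambda$ and exploiting its model-homogeneity to witness equality or inequality; the hypothesis on $\cf (\lambda)$ or $\lambda \ge H_1$ is exactly what guarantees enough room to carry out the extraction. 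Once weak tameness (\ref{big-cor-5a}) is in hand, (\ref{big-cor-5b}) follows by the now-standard construction of a type-full good $\mu$-frame on the saturated models from weak tameness, superstability, symmetry, and chain-saturation, as given for instance in \cite{ss-tame-jsl} or \cite{indep-aec-apal}.

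The hardest step is establishing weak tameness without assuming $\lambda$ lies above an iterated Hanf bound: previous arguments needed $\lambda \ge \beth_{(2^{\mu^+})^+}$ or similar to first force saturation and then deduce tameness. The new input that eliminates this cardinal-arithmetic hypothesis is precisely Corollary \ref{solvable-saturated-cor}, which already gives saturation of the model in $\lambda$ unconditionally; with saturation available, the standard tameness-extraction argument goes through under the mild assumption $\cf (\lambda) > \LS (\K)$ or $\lambda \ge H_1$.
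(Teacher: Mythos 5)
Your proposal is correct and follows essentially the same route as the paper: parts (\ref{big-cor-1})--(\ref{big-cor-4}) are assembled from Fact \ref{shvi}, Corollary \ref{sym-cor}, VanDieren's uniqueness-of-limit-models theorem, Fact \ref{sat-sym-fact}, and the proof of Corollary \ref{solv-downward}, and part (\ref{big-cor-5a}) is obtained by feeding the new saturation result (Corollary \ref{solvable-saturated-cor}) into Shelah's weak-tameness extraction claims, with (\ref{big-cor-5b}) then following from the standard good-frame construction on saturated models. The only divergence is cosmetic, in which external references are cited for the black-box steps (the paper uses \cite[Claim IV.7.2]{shelahaecbook}, \cite[Main claim II.2.3]{sh394}, and \cite[Theorem 6.4]{vv-symmetry-transfer-afml}).
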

\begin{proof}
  Item (\ref{big-cor-1}) is Fact \ref{shvi} and Corollary \ref{sym-cor}. As for (\ref{big-cor-2}), let $\mu \in [\LS (\K), \lambda)$. By the previous part, $\K$ is $\mu$-superstable and has $\mu$-symmetry. By the main result of \cite{vandieren-symmetry-apal}, this implies uniqueness of limit models as stated here.

    Items (\ref{big-cor-3}) and (\ref{big-cor-4}) are part of the proof of Corollary \ref{solv-downward}. As for (\ref{big-cor-5a}), we use the relevant facts (due to Shelah) which assumes that the model in the categoricity (or just solvability) cardinal is saturated. They appear in \cite[Claim IV.7.2]{shelahaecbook} and \cite[Main claim II.2.3]{sh394} (depending on whether $\cf{\lambda} > \LS (\K)$ or $\lambda \ge H_1$), see also \cite[Theorem 8.4]{downward-categ-tame-apal}. Now (\ref{big-cor-5b}) follows from (\ref{big-cor-5a}) by \cite[Theorem 6.4]{vv-symmetry-transfer-afml}.
\end{proof}
\begin{remark}
  Corollary \ref{big-cor}.(\ref{big-cor-2}) proves \cite[Theorem 3.3.7]{shvi635} with the additional assumption that the class has amalgamation and improves on \cite[Corollary 7.3]{vv-symmetry-transfer-afml} which assumed that the categoricity cardinal $\lambda$ was ``big-enough''. See Section \ref{uq-limit-subsec} for more on the uniqueness of limit models.
\end{remark}

\subsection{Some categoricity transfers}\label{categ-subsec}

We mention improvements on several existing categoricity transfers. The partial downward transfer below improves on \cite[Corollary 7.7]{vv-symmetry-transfer-afml} and \cite[Corollary 9.7]{downward-categ-tame-apal}. The essence of the proof is a powerful omitting type theorem of Shelah \cite[Lemma II.1.6]{sh394}. Indeed the result is already implicit in \cite{sh394} when the categoricity cardinal $\lambda$ is regular (see also \cite[Theorem 14.9]{baldwinbook09}).

\begin{cor}
  Let $\K$ be an AEC with arbitrarily large models and let $\lambda > \LS (\K)$. Assume that $\K_{<\lambda}$ has amalgamation and no maximal models. If $\K$ is categorical in $\lambda$, then there exists $\chi < H_1$ such that $\K$ is categorical in any cardinal of the form $\beth_\delta$, where $\delta$ is divisible by $\chi$ and $\beth_\delta < \lambda$.
\end{cor}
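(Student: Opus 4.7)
By Remark \ref{solv-rmk}, categoricity in $\lambda$ together with $\K$ having arbitrarily large models implies that $\K$ is solvable in $\lambda$. Corollary \ref{solvable-saturated-cor}.(\ref{solvable-sat-3}) then gives that the unique model of cardinality $\lambda$ is Galois-saturated, and Corollary \ref{big-cor}.(\ref{big-cor-4}) provides an EM blueprint $\Psi \in \Upsilon_{\LS (\K)}[\K]$ such that $\EM_{\tau (\K)} (I, \Psi)$ is saturated for every linear order $I$ with $|I| \in (\LS (\K), \lambda]$. Moreover, by Corollary \ref{big-cor}.(\ref{big-cor-1}), $\K$ is $\mu$-superstable for every $\mu \in [\LS (\K), \lambda)$; in particular, amalgamation holds in each such $\K_\mu$, so saturated models in $\K_\mu$ are unique up to isomorphism.

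The key input is Shelah's omitting types theorem for EM models (\cite[Lemma II.1.6]{sh394}; see also \cite[Theorem 14.9]{baldwinbook09}). In our setting, it supplies a threshold cardinal $\chi < H_1$ with the following property: if $p$ is a Galois type over some $M_0 \in \K_{\LS (\K)}$ and there exists $M \in \K_{\beth_\delta}$ with $M_0 \lea M$ such that $M$ omits $p$, $\delta$ is a limit ordinal divisible by $\chi$, and $\beth_\delta \le \lambda$, then the underlying EM/Skolem-hull construction can be extended to produce $N \in \K_\lambda$ with $M_0 \lea N$ that still omits $p$. This is the powerful Hanf-number-style omitting types statement from \cite{sh394}.

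With such $\chi$ in hand, the transfer is immediate. Fix $\mu = \beth_\delta < \lambda$ with $\chi$ dividing $\delta$ and any $M \in \K_\mu$. Suppose for contradiction that $M$ is not Galois-saturated. Then by the Löwenheim-Skolem-Tarski axiom there is $M_0 \lea M$ with $M_0 \in \K_{\LS (\K)}$ and a Galois type $p \in \gS (M_0)$ omitted by $M$. The omitting types theorem produces $N \in \K_\lambda$ extending $M_0$ and omitting $p$; but the unique model in $\K_\lambda$ is saturated and therefore realizes $p$, a contradiction. Hence every $M \in \K_\mu$ is saturated, and by uniqueness of saturated models in $\K_\mu$ all such $M$ are isomorphic, yielding categoricity in $\mu$. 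The main obstacle is the careful invocation of Shelah's omitting types theorem: its statement is technical and the Hanf-number bookkeeping that delivers $\chi < H_1$ is delicate. The contribution of the present paper enters only at the very first step, making the input ``the model of cardinality $\lambda$ is saturated'' available even when $\lambda$ is singular, which is precisely what was missing to run Shelah's argument in full generality.
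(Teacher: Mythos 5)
Your overall strategy --- feed the new input ``the model of cardinality $\lambda$ is saturated'' (Corollary \ref{solvable-saturated-cor}) into Shelah's omitting types theorem and conclude via uniqueness of saturated models --- is exactly the intended one; the paper's own proof is essentially a citation of (the proof of) \cite[Corollary 9.7]{downward-categ-tame-apal} for everything after that first step. However, your execution of the omitting types argument has a genuine gap at the reduction step.

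You claim that if $M \in \K_{\beth_\delta}$ is not Galois-saturated, then by L\"owenheim-Skolem-Tarski there are $M_0 \lea M$ with $M_0 \in \K_{\LS (\K)}$ and $p \in \gS (M_0)$ omitted by $M$. This is false: non-saturation only yields an omitted type over some $N \lea M$ with $\|N\| < \beth_\delta$, and restricting such a type to an $\LS (\K)$-sized submodel can destroy omission (otherwise saturation would be equivalent to $\LS (\K)^+$-saturation, which already fails for first-order theories). As written, your argument shows only that every model of cardinality $\beth_\delta$ is $\LS (\K)^+$-saturated, which is not enough for categoricity in $\beth_\delta$. To repair it one must apply the omitting types theorem to types over bases $N$ of (cofinally many) cardinalities $\mu < \beth_\delta$, and the Hanf number for omitting a Galois type over a model of cardinality $\mu$ is roughly $\beth_{(2^{\mu})^+}$, i.e., it grows with $\mu$. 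This is precisely why the conclusion is restricted to cardinals $\beth_\delta$ with $\delta$ divisible by $\chi$: the divisibility ensures that cofinally many $\mu < \beth_\delta$ leave enough room between $\mu$ and $\beth_\delta$ to run the transfer up to $\lambda$, and it is this bookkeeping (carried out in \cite[Corollary 9.7]{downward-categ-tame-apal}) that your sketch skips; in your version, where the omitting types statement is quantified only over $\LS (\K)$-sized bases, the divisibility hypothesis does no work at all. The first step (saturation of the model of cardinality $\lambda$) and the last step (uniqueness of saturated models under amalgamation) are fine.
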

\begin{proof}
  By (the proof of) \cite[Corollary 9.7]{downward-categ-tame-apal}, using that the model of categoricity $\lambda$ is saturated (Corollary \ref{solvable-saturated-cor}).
\end{proof}

We can also improve the thresholds of Shelah's proof of the eventual categoricity conjecture in AECs with amalgamation \cite[Theorem IV.7.12]{shelahaecbook} assuming the weak generalized continuum hypothesis. Shelah showed (assuming an unpublished claim) that in an AEC with amalgamation, categoricity in \emph{some} $\lambda \ge \hanfe{\aleph_{\LS (\K)^+}}$ implies categoricity in \emph{all} $\lambda' \ge \hanfe{\aleph_{\LS (\K)^+}}$.

Shelah's proof was revisited and expanded on in \cite[Section 11]{downward-categ-tame-apal}, from which we quote. Here, we improve the main lemma to:

\begin{lem}\label{main-lem}
  Assume an unpublished claim of Shelah \cite[Claim 11.2]{downward-categ-tame-apal}. Assume that $\K$ has arbitrarily large models. Let $\lambda \ge \mu > \LS (\K)$. Assume that $\K_{<\lambda}$ has amalgamation. If:

  \begin{enumerate}
  \item $\K$ is categorical in $\lambda$.
  \item $\mu$ is a limit cardinal with $\cf{\mu} > \LS (\K)$.
  \item For unboundedly many $\chi < \mu$, $2^{\chi^{+n}} < 2^{\chi^{+(n + 1)}}$ for all $n < \omega$.
  \end{enumerate}

  Then there exists $\mu_\ast < \mu$ such that $\K$ is categorical in any $\lambda' \ge \min(\lambda, \hanfe{\mu_\ast})$.
\end{lem}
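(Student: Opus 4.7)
The plan is to follow the proof of the corresponding main lemma in \cite[Section 11]{downward-categ-tame-apal}, which obtained the same conclusion but under a stronger hypothesis on $\lambda$ essentially forced by the need to know that the model of cardinality $\lambda$ is saturated. With Corollary \ref{solvable-saturated-cor}.(\ref{solvable-sat-3}) in hand, this saturation is now a consequence of categoricity in $\lambda$ together with amalgamation and no maximal models below $\lambda$, so the extra hypothesis can be dispensed with and the same argument applies under the present, weaker assumptions.

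Concretely, I would proceed in three steps. First, invoke Corollary \ref{solvable-saturated-cor}.(\ref{solvable-sat-3}) to conclude that the (unique) model of cardinality $\lambda$ is Galois-saturated. Second, apply Corollary \ref{big-cor} (splitting into cases according as $\cf{\lambda} > \LS (\K)$ or $\cf{\lambda} \le \LS (\K)$, in the latter case either increasing to a cardinal of the form $\aleph_{\mu^{+4}}$ below $\lambda$ for which we have the relevant instance of saturation, or observing that $\lambda \ge H_1$ so Corollary \ref{big-cor}.(\ref{big-cor-5}) applies directly) to obtain some $\chi < \min(\lambda, H_1)$ witnessing weak tameness below $\lambda$, together with a type-full good $\mu'$-frame on the saturated models in $\K_{\mu'}$ for each $\mu' \in (\chi, \lambda)$. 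These are exactly the structural inputs required by the Shelah-style omitting types machinery.

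Third, pick $\mu_\ast \in (\chi, \mu)$ witnessing the cardinal arithmetic hypothesis (such a $\mu_\ast$ exists because, by assumption, the arithmetic hypothesis holds for unboundedly many $\chi' < \mu$ and we have $\cf{\mu} > \LS (\K) \ge \aleph_0$). Feeding $\mu_\ast$ and the good frames from Corollary \ref{big-cor} into Shelah's unpublished Claim 11.2 (as the claim is invoked in the proof of \cite[Lemma 11.3]{downward-categ-tame-apal}) yields categoricity in every $\lambda' \ge \min(\lambda, \hanfe{\mu_\ast})$ via the usual upward and downward transfers.

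The main obstacle is to verify that the \emph{only} place where the stronger hypothesis on $\lambda$ was used in the original proof of \cite[Section 11]{downward-categ-tame-apal} was to guarantee that the model of cardinality $\lambda$ is saturated, and to check that the substitution of that hypothesis by a direct appeal to Corollary \ref{solvable-saturated-cor} leaves every subsequent step intact. Since Corollaries \ref{solvable-saturated-cor}, \ref{sym-cor}, and \ref{big-cor} together recover all the structural consequences of categoricity that were previously available only once $\lambda$ was assumed large enough to force saturation, the remainder of the argument should go through essentially verbatim, and the only delicate point is keeping track of which case of Corollary \ref{big-cor}.(\ref{big-cor-5}) is being used according to the cofinality of $\lambda$.
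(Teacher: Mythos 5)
Your proposal matches the paper's own proof, which simply proceeds as in the proof of \cite[Fact 11.10]{downward-categ-tame-apal}, now using that the model of cardinality $\lambda$ is saturated via Corollary \ref{solvable-saturated-cor}. The one point you gloss over is that the lemma assumes only amalgamation below $\lambda$ and not no maximal models; the paper notes that the cited proof shows one may assume without loss of generality that $\K_{<\lambda}$ has no maximal models, which is what licenses the appeal to Corollary \ref{solvable-saturated-cor} in the first place.
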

\begin{proof}
  As in the proof of \cite[Fact 11.10]{downward-categ-tame-apal}, using that we know that the model of categoricity $\lambda$ is saturated (it is shown there that we can assume without loss of generality that $\K_{<\lambda}$ has no maximal models).
\end{proof}

We deduce that one can start with $\lambda \ge \aleph_{\LS (\K)^+}$ instead of $\lambda \ge \hanfe{\aleph_{\LS (\K)^+}}$.

\begin{cor}\label{shelah-improvement}
  Assume an unpublished claim of Shelah \cite[Claim 11.2]{downward-categ-tame-apal} and $2^{\mu} < 2^{\mu^+}$ for all cardinals $\mu$. Assume that $\K$ has arbitrarily large models. Let $\lambda \ge \aleph_{\LS (\K)^+}$ be such that $\K_{<\lambda}$ has amalgamation. If $\K$ is categorical in $\lambda$, then $\K$ is categorical in any $\lambda' \ge \min (\lambda, \hanfe{<\aleph_{\LS (\K)^+}})$.
\end{cor}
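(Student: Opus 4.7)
The plan is to apply Lemma \ref{main-lem} directly with $\mu := \aleph_{\LS (\K)^+}$ and then translate the resulting threshold into the bound stated in the corollary. First I verify that this choice of $\mu$ meets the hypotheses of the lemma: $\K$ has arbitrarily large models and $\K_{<\lambda}$ has amalgamation by the assumptions of the corollary; we have $\lambda \ge \aleph_{\LS (\K)^+} = \mu > \LS (\K)$ by the lower bound on $\lambda$; the cardinal $\mu = \aleph_{\LS (\K)^+}$ is a limit with $\cf{\mu} = \LS (\K)^+ > \LS (\K)$; and the weak GCH assumption $2^\chi < 2^{\chi^+}$ for every cardinal $\chi$ implies $2^{\chi^{+n}} < 2^{\chi^{+(n + 1)}}$ for every $n < \omega$, so the third hypothesis of Lemma \ref{main-lem} holds for \emph{every} $\chi < \mu$ (and in particular unboundedly many).

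Lemma \ref{main-lem} then supplies some $\mu_\ast < \mu = \aleph_{\LS (\K)^+}$ such that $\K$ is categorical in every $\lambda' \ge \min (\lambda, \hanfe{\mu_\ast})$. Since $\hanfe{\mu_\ast} = \beth_{(2^{\mu_\ast})^+}$ is monotone in $\mu_\ast$, the inequality $\mu_\ast < \aleph_{\LS (\K)^+}$ yields $\hanfe{\mu_\ast} \le \hanfe{<\aleph_{\LS (\K)^+}}$, and hence $\min (\lambda, \hanfe{\mu_\ast}) \le \min (\lambda, \hanfe{<\aleph_{\LS (\K)^+}})$. Therefore any $\lambda'$ with $\lambda' \ge \min (\lambda, \hanfe{<\aleph_{\LS (\K)^+}})$ automatically satisfies $\lambda' \ge \min (\lambda, \hanfe{\mu_\ast})$, and Lemma \ref{main-lem} delivers categoricity in such $\lambda'$, as required.

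There is essentially no obstacle beyond checking hypotheses: the corollary is a direct repackaging of Lemma \ref{main-lem}. The only noteworthy feature is that pushing the starting cardinal all the way down from $\hanfe{\aleph_{\LS (\K)^+}}$ (Shelah's original bound) to $\aleph_{\LS (\K)^+}$ itself is exactly what the improved saturation result (Corollary \ref{solvable-saturated-cor}), already built into the proof of Lemma \ref{main-lem} via the verification that $\K_{<\lambda}$ may be assumed to have no maximal models, makes possible.
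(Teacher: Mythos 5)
Your proposal is correct and matches the paper's proof exactly: the paper's entire argument is ``Set $\mu := \aleph_{\LS (\K)^+}$ in Lemma \ref{main-lem},'' and you have simply spelled out the hypothesis-checking and the monotonicity of $\mu_\ast \mapsto \hanfe{\mu_\ast}$ that the paper leaves implicit.
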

\begin{proof}
  Set $\mu := \aleph_{\LS (\K)^+}$ in Lemma \ref{main-lem}.
\end{proof}

We showed in \cite[Corollary 11.9]{downward-categ-tame-apal} that if $\K$ is $\LS (\K)$-tame and has amalgamation, then categoricity in some $\lambda \ge H_1$ implies categoricity in all $\lambda' \ge H_1$ (still assuming weak GCH and Shelah's unpublished claim). In \cite[Corollary B.7]{downward-categ-tame-apal}, we showed that it was consistent (using additional cardinal arithmetic assumptions) that one could replace tameness by just weak tameness. Here we prove it unconditionally.

\begin{cor}
  Assume an unpublished claim of Shelah \cite[Claim 11.2]{downward-categ-tame-apal} and there exists $\mu < \aleph_{\LS (\K)^+}$ such that $2^{\mu^{+n}} < 2^{\mu^{+(n + 1)}}$ for all $n < \omega$. Assume that $\K$ is $(\LS (\K), <H_1)$-weakly tame and has arbitrarily large models. Let $\lambda \ge \aleph_{\LS (\K)^+}$ be such that $\K_{<\lambda}$ has amalgamation. If $\K$ is categorical in $\lambda$, then there exists $\chi < H_1$ such that $\K$ is categorical in any $\lambda' \ge \min (\lambda, \chi)$.
\end{cor}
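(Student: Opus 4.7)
The strategy is to revisit \cite[Corollary B.7]{downward-categ-tame-apal}, which proved essentially this statement under extra cardinal arithmetic hypotheses, and to remove those extra hypotheses by feeding in Corollary \ref{solvable-saturated-cor} of the present paper in place of the arithmetic-based saturation argument used there. Concretely, the first step is to apply Corollary \ref{solvable-saturated-cor}.(\ref{solvable-sat-3}) to conclude that the unique model in $\K_\lambda$ is saturated; this is exactly the input that required the extra cardinal arithmetic in the previous treatment.

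Next I would apply a weak-tame variant of Lemma \ref{main-lem}. Lemma \ref{main-lem} in its raw form is not directly applicable, since it demands unboundedly many $\chi$ below a limit cardinal $\nu$ with $\cf{\nu} > \LS(\K)$ satisfying the weak GCH tower $2^{\chi^{+n}} < 2^{\chi^{+(n+1)}}$, while we are given only the single such $\mu_0 < \aleph_{\LS(\K)^+}$. However, in the proofs of the relevant facts in \cite[Section 11]{downward-categ-tame-apal} and \cite[Appendix B]{downward-categ-tame-apal} the unboundedness is used essentially to manufacture weak tameness from categoricity at many cardinals; since we assume $(\LS(\K), <H_1)$-weak tameness outright, the weak GCH at the single cardinal $\mu_0$, together with Shelah's unpublished claim and the saturation of the model in $\K_\lambda$, should suffice to produce a type-full good $\chi$-frame on the saturated models in $\K_\chi$ for some $\chi < H_1$ computable from $\mu_0$.

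Finally I would propagate categoricity from this good frame upward using the standard transfer machinery (see for example \cite[Theorem 10.6]{downward-categ-tame-apal}) and combine this with the hypothesis of categoricity in $\lambda$ to obtain categoricity in every $\lambda' \ge \min(\lambda, \chi)$.

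The main obstacle is the second step: carefully redoing the good $\chi$-frame construction from \cite[Appendix B]{downward-categ-tame-apal} using $(\LS(\K), <H_1)$-weak tameness and the single weak GCH instance at $\mu_0$, with the unconditional saturation at $\lambda$ from Corollary \ref{solvable-saturated-cor} standing in for the arithmetic-based substitute used there. Once this is done, the rest is a mechanical application of existing transfer theorems.
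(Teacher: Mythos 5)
Your proposal matches the paper's proof: both revisit the argument of \cite[Corollary B.7]{downward-categ-tame-apal}, replace its cardinal-arithmetic-based saturation input with the unconditional saturation of the model of cardinality $\lambda$ from Corollary \ref{solvable-saturated-cor} (after reducing without loss of generality to the case where $\K_{<\lambda}$ has no maximal models), and then invoke the transfer machinery of this section. The paper routes the final step through the improved transfer of Corollary \ref{shelah-improvement} rather than a bespoke weak-tame variant of Lemma \ref{main-lem}, but this is the same circle of results and the overall decomposition is identical.
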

\begin{proof}
  Proceed as in the proof of \cite[Corollary B.7]{downward-categ-tame-apal} (as before, we can assume without loss of generality that $\K_{<\lambda}$ has no maximal models, hence the model of cardinality $\lambda$ is saturated). We use the better transfer we have just proven (Corollary \ref{shelah-improvement}).
\end{proof}

We also obtain more information on the author's categoricity transfer in universal classes \cite{ap-universal-v11-toappear, categ-universal-2-selecta}. There it was shown \cite[Theorem 7.3]{categ-universal-2-selecta} that if a universal class $\K$ is categorical in \emph{some} $\lambda \ge \beth_{H_1}$, then it is categorical in \emph{all} $\lambda' \ge \beth_{H_1}$. The reason that the threshold is $\beth_{H_1}$ rather than $H_1$ is that the proof works inside an auxiliary AEC $\K^\ast$ whose Löwenheim-Skolem-Tarski number is around $H_1$. A closer look at the proof reveals that $\LS (\K^\ast)$ is related to the length of a failure of the order property, so we can use Theorem \ref{op-thm} to improve the bound on $\LS (\K^\ast)$. We are unable to do so unconditionally so will assume that the class has no maximal models:

\begin{lem}\label{univ-nmm-cor}
  Let $\K$ be a universal class. Set $\mu := 2^{2^{\LS (\K)}}$, $\chi_1 := \beth_{\mu^{++}}$, $\chi_2 := \beth_{\left(2^{\mu^+}\right)^+}$. Let $\lambda > \chi_1$. If $\K$ is categorical in $\lambda$ and $\K_{<\lambda}$ has no maximal models\footnote{It suffices to assume that for every $M \in \K_{\LS (\K)^+}$ there exists $N \in \K_\lambda$ with $M \lea N$.} , then there exists $\chi < \chi_2$ such that $\K$ is categorical (and has amalgamation) in all $\lambda' \ge \min (\lambda, \chi)$.
\end{lem}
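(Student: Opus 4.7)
The plan is to follow the argument of \cite[Theorem 7.3]{categ-universal-2-selecta} for categoricity transfer in universal classes, substituting the sharp order-property bound from Theorem \ref{op-thm} in place of the cruder Hanf-number bound used there. First, categoricity in $\lambda$ implies that $\K$ is semisolvable in $\lambda$ (Remark \ref{solv-rmk}), so Theorem \ref{op-thm}, applied with $\kappa < \aleph_0$ and its parameter $\mu$ equal to $\LS (\K)$, tells us that $\K$ does not have the $(\kappa, \LS (\K))$-order property of length $\LS (\K)^+$.

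Next, one traces through the construction of the auxiliary AEC $\K^\ast$ built in \cite{ap-universal-v11-toappear, categ-universal-2-selecta}. There $\K^\ast$ consists of suitably saturated expansions, and its Löwenheim-Skolem-Tarski number is controlled by the least cardinal at which the order property must fail; in the original argument this forced $\LS (\K^\ast)$ up to the neighborhood of $H_1$. Substituting the new bound, we obtain $\LS (\K^\ast)$ at most $\mu = 2^{2^{\LS (\K)}}$ (the two additional exponentials come from the saturation and coding needed in the construction of $\K^\ast$, not from the order property itself). The hypothesis $\lambda > \chi_1 = \beth_{\mu^{++}}$ provides enough room below $\lambda$ to carry out the construction and locate the required saturated model, while the no-maximal-models hypothesis (or its weaker footnote variant) ensures that $\K^\ast$ has arbitrarily large models up to cardinality $\lambda$.

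With $\LS (\K^\ast)$ of order $\mu$ in hand, the categoricity-transfer machinery already available for $\K^\ast$ in \cite{categ-universal-2-selecta} yields a threshold of the form $H_1 (\K^\ast) = \beth_{\left(2^{\LS (\K^\ast)}\right)^+}$, which with the improved bound is at most $\chi_2 = \beth_{\left(2^{\mu^+}\right)^+}$. Pulling this threshold back to $\K$ produces some $\chi < \chi_2$ such that $\K$ is categorical (and has amalgamation) in every $\lambda' \geq \min (\lambda, \chi)$.

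The main obstacle is the bookkeeping in the middle step: one must go through \cite{ap-universal-v11-toappear, categ-universal-2-selecta} and verify that every appeal to the order-property bound can be replaced verbatim by the new bound from Theorem \ref{op-thm}, and that no other ingredient of the construction of $\K^\ast$ (the choice of stability cardinals, the amount of saturation required of its members, the auxiliary presentation theorems used, etc.) forces the original $H_1$-sized threshold to remain in effect. This is essentially a careful audit of the existing argument with the sharper input in hand, rather than any genuinely new ingredient.
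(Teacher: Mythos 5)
Your overall strategy matches the paper's: use Theorem \ref{op-thm} (via semisolvability from categoricity) to get failure of the $(\kappa, \LS (\K))$-order property of length $\LS (\K)^+$ for $\kappa < \aleph_0$, feed this into the construction of the auxiliary class $\K^\ast$ from \cite{ap-universal-v11-toappear, categ-universal-2-selecta} to pull $\LS (\K^\ast)$ down from around $H_1$ to around $\mu^+$ (the paper computes $\LS (\K^\ast) = \mu^+$, where $\mu = 2^{2^{\LS (\K)}}$; your ``at most $\mu$'' is off by a successor but harmless), and then run the existing transfer for $\K^\ast$ to land below $\chi_2$. That part of the audit is fine, including the point (which the paper makes explicit) that the syntactic order property of \cite{categ-universal-2-selecta} yields the $(\kappa, 0)$-order property of Definition \ref{op-def} for some finite $\kappa$, so the least relevant $\chi_0$ is $\LS (\K)$ itself.

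The genuine gap is your treatment of the hypothesis $\lambda > \chi_1$, which you dispose of as ``enough room below $\lambda$ to carry out the construction.'' That is not what $\chi_1$ is for. The issue is that $\K^\ast$ need not satisfy the smoothness axiom of AECs, and the proof of \cite[Theorem 7.2]{categ-universal-2-selecta} establishes smoothness only when the categoricity cardinal satisfies $\left(\beth_{\mu^{++}}\right)^+ < \lambda$, i.e.\ $\lambda > \chi_1^+$. Under the stated hypothesis $\lambda > \chi_1$ the borderline case $\lambda = \chi_1^+$ remains, and there your ``careful audit with the sharper input'' does not close it: the paper needs a genuinely separate ingredient, namely that $\chi_1^+$ is regular, so by \cite[Theorem IV.1.11]{sh300-orig} a failure of smoothness would produce many models in the categoricity cardinal, contradicting categoricity; hence $\K^\ast$ is an AEC whenever $\lambda > \chi_1$. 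Without this step your argument only proves the lemma under the stronger hypothesis $\lambda > \chi_1^+$, so either add the regularity/non-smoothness argument or weaken the statement accordingly.
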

\begin{proof}[Proof sketch]
  First observe that $\chi_1 \ge H_1$, so $\K$ has arbitrarily large models. Second, by Theorem \ref{op-thm} and the no maximal models hypothesis, for any $\kappa < \aleph_0$, $\K$ does not have the $(\kappa, \LS (\K))$-order property of length $\LS (\K)^+$.

  We now follow the proof of \cite[Theorems 7.2, 7.3]{categ-universal-2-selecta}. We define an auxiliary class $\K^\ast$ which will have Löwenheim-Skolem-Tarski number $\left(2^{2^{\chi_0}}\right)^+$, where $\chi_0 \ge \LS (\K)$ is least such that $\K$ does not have a syntactic version of the order property of length $\chi_0^+$. It is straightforward to see that if $\K$ has the order property (in the sense there) of length $\chi_0^+$, then for some $\kappa < \aleph_0$ $\K$ has the $(\kappa, 0)$-order property (in the sense of Definition \ref{op-def}) of length $\chi_0^+$. This means that $\chi_0 = \LS (\K)$, and hence $\LS (\K^\ast) = \mu^+$.

  Now $\K^\ast$ may not satisfy the smoothness axiom of AECs and to ensure this the proof of \cite[Theorem 7.2]{categ-universal-2-selecta} uses categoricity in a $\lambda$ with $\left(\beth_{\mu^{++}}\right)^+ < \lambda$. However if $\lambda = \left(\beth_{\mu^{++}}\right)^+$, then $\lambda$ is regular so by \cite[Theorem IV.1.11]{sh300-orig} (building many models in the categoricity cardinal from failure of smoothness) we also get that $\K^\ast$ is an AEC. Therefore $\K^\ast$ is an AEC whenever $\lambda > \beth_{\mu^{++}} = \chi_1$. and we can then continue exactly as in the proof of \cite[Theorem 7.3]{categ-universal-2-selecta}.
\end{proof}

A more quotable version of Lemma \ref{univ-nmm-cor} is below. Compared to \cite[Theorem 7.3]{categ-universal-2-selecta}, $\beth_{\beth_{\left(2^{\LS (\K)}\right)^+}}$ is replaced by the much lower $\beth_{\beth_{5} (\LS (\K))}$.

\begin{cor}
  Let $\K$ be a universal class with no maximal models. If $\K$ is categorical in \emph{some} $\lambda \ge \beth_{\beth_5 (\LS (\K))}$, then $\K$ is categorical in \emph{all} $\lambda' \ge \beth_{\beth_5 (\LS (\K))}$.
\end{cor}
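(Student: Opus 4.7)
The plan is to show that this is an essentially immediate corollary of Lemma \ref{univ-nmm-cor}, reducing everything to a short cardinal-arithmetic verification. First I would set $\mu := 2^{2^{\LS (\K)}}$, so $\mu = \beth_2 (\LS (\K))$, and then track the two thresholds $\chi_1 := \beth_{\mu^{++}}$ and $\chi_2 := \beth_{(2^{\mu^+})^+}$ appearing in Lemma \ref{univ-nmm-cor}. Writing $\beth_5 (\LS (\K))$ out explicitly and using only monotonicity of the beth function, one checks that $(2^{\mu^+})^+ \le \beth_5 (\LS (\K))$, hence $\chi_2 \le \beth_{\beth_5 (\LS (\K))}$, and similarly $\chi_1 \le \chi_2 \le \beth_{\beth_5 (\LS (\K))}$.

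Next I would unpack the hypothesis: assume $\K$ is categorical in some $\lambda \ge \beth_{\beth_5 (\LS (\K))}$. Then in particular $\lambda > \chi_1$, so Lemma \ref{univ-nmm-cor} applies (the no-maximal-models hypothesis is directly inherited). The lemma delivers some $\chi < \chi_2$ such that $\K$ is categorical in every $\lambda' \ge \min (\lambda, \chi)$. Since $\chi < \chi_2 \le \beth_{\beth_5 (\LS (\K))} \le \lambda$, we have $\min (\lambda, \chi) = \chi < \beth_{\beth_5 (\LS (\K))}$.

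Therefore $\K$ is categorical in every $\lambda' \ge \chi$, and in particular in every $\lambda' \ge \beth_{\beth_5 (\LS (\K))}$, which is the desired conclusion. The only real content is Lemma \ref{univ-nmm-cor}; there is no genuine obstacle in the corollary itself, merely the bookkeeping needed to repackage $\chi_2$ as $\beth_{\beth_5 (\LS (\K))}$. The one subtlety worth double-checking is that $\chi_2$ really does sit below $\beth_{\beth_5 (\LS (\K))}$ (and not merely below $\beth_{\beth_5 (\LS (\K)) + 1}$ or similar), since the exponent ``$5$'' is tight; but this follows from the crude estimate $(2^{\mu^+})^+ \le \beth_4 (\LS (\K))^+ \le \beth_5 (\LS (\K))$ together with the fact that $\beth$ is increasing.
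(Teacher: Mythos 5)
Your proof is correct and is essentially the paper's own argument: the paper's proof consists of the single observation that $\chi_1 < \chi_2 \le \beth_{\beth_5(\LS(\K))}$ in the statement of Lemma \ref{univ-nmm-cor}, which is exactly the cardinal-arithmetic bookkeeping you carry out (and your estimate $(2^{\mu^+})^+ \le \beth_4(\LS(\K))^+ \le \beth_5(\LS(\K))$ with $\mu = \beth_2(\LS(\K))$ is the right verification). Nothing further is needed.
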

\begin{proof}
  In the statement of Lemma \ref{univ-nmm-cor}, $\chi_1 < \chi_2 \le \beth_{\beth_5 (\LS (\K))}$.
\end{proof}
\begin{remark}
  One can ask what happens if instead of no maximal models, one makes the stronger assumption of amalgamation below the categoricity cardinal. Then we obtain the best possible result as in \cite[Corollary 10.11]{downward-categ-tame-apal} (this is proven using amalgamation also above the categoricity cardinal, but we can use \cite[Theorem 4.16]{ap-universal-v11-toappear} to get away with just amalgamation below).
\end{remark}

\subsection{More on the uniqueness of limit models}\label{uq-limit-subsec}

The main result of \cite{shvi635} claims that assuming no maximal models and GCH (with instances of $\Diamond$), limit models of cardinality $\mu$ are unique for any $\mu$ below the categoricity cardinal $\lambda$. In VanDieren's Ph.D.\ thesis \cite{vandierenthesis}, two additional hypotheses that it seemed the proof needed were identified (see \cite{vandierennomax} for background and terminology):

\begin{enumerate}
\item\label{problem-1} The union of any increasing chain of limit models in $\K_\mu$ of length less than $\mu^+$ is a limit model.
\item\label{problem-2} Reduced towers in $\K_\mu$ are continuous.
\end{enumerate}

In \cite[Theorem III.10.3]{vandierennomax}, VanDieren claimed to prove (\ref{problem-2}) assuming (\ref{problem-1}). VanDieren later found a gap \cite{nomaxerrata} and fixed the gap assuming in addition that $\lambda = \mu^+$. Here we prove the original statement of \cite[Theorem III.10.3]{vandierennomax} (still using (\ref{problem-1})).

The key is the next result which generalizes Corollary \ref{sym-cor}. Note that we do not assume (\ref{problem-1}). Note also that, below, we use only the model-theoretic consequences (in the context described above \cite{shvi635}) of GCH and appropriate instances of $\Diamond$. Finally, note that we have replaced the assumption of categoricity in $\lambda$ mentioned above by the weaker assumption of semisolvability in $\lambda$ (see Remark \ref{solv-rmk}).

\begin{cor}\label{sym-thm}
  Let $\lambda > \LS (\K)$. Assume that $\K_{<\lambda}$ has no maximal models. Assume that $\K$ semisolvable in $\lambda$ and fix $\mu \in [\LS (\K), \lambda)$. If in $\K_{\mu}$ amalgamation bases are dense, universal extensions exist, and limit models are amalgamation bases, then $\K$ has $\mu$-symmetry (Definition \ref{sym-def} has to be slightly adapted by asking for $N$ to be an amalgamation base).
\end{cor}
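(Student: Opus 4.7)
The plan is to mirror the proof of Corollary \ref{sym-cor} essentially verbatim, verifying at each stage that the intermediate facts still go through under the weaker Shelah--Villaveces hypotheses (density of amalgamation bases rather than full amalgamation below $\lambda$). The three ingredients to revisit are the Shelah--Villaveces Theorem (Fact \ref{shvi}), the order-property-from-failure-of-symmetry theorem (Fact \ref{op-sym-fact}), and Theorem \ref{op-thm}. The last of these requires no amalgamation at all, so it transfers for free; the first two are what we must check.

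First, I would observe that the Shelah--Villaveces theorem was originally proved in exactly the context assumed here: density of amalgamation bases in $\K_\mu$, existence of universal extensions over amalgamation bases, and limit models being amalgamation bases. So a suitable local superstability condition (stated relative to amalgamation bases, as in \cite{shvi635, shvi-notes-v3-toappear}) holds in every $\mu' \in [\LS(\K), \lambda)$ satisfying the three hypotheses of the corollary. In particular, we have no long splitting chains in $\mu$ built over amalgamation bases.

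Second, suppose for contradiction that $\K$ fails $\mu$-symmetry in the adapted sense (with $N$ an amalgamation base). I would then revisit the proof of Fact \ref{op-sym-fact}. The construction in \cite[Theorem~5.8]{vv-symmetry-transfer-afml} builds $M_0 \in \K_\mu$, elements $a, b$, and increasing continuous sequences $\seq{N_\alpha : \alpha < \lambda}$, $\seq{a_\alpha : \alpha < \lambda}$, $\seq{b_\alpha : \alpha < \lambda}$ satisfying conditions (\ref{important-1}) and (\ref{important-2}) from the proof of Fact \ref{op-sym-fact}, via a sequence of splitting-controlled amalgamations. Under our hypotheses, density of amalgamation bases lets us arrange every base model in the construction to be an amalgamation base, and the assumption that limit models are amalgamation bases ensures the construction is not disrupted at limit stages (limits of amalgamation bases along sufficiently nice chains remain usable). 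Hence the construction yields the $(2, \mu)$-order property of length $\lambda$, just as in Fact \ref{op-sym-fact}.

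Finally, since $\mu < \lambda$, in particular $\K$ has the $(2, \mu)$-order property of length $\mu^+$, and applying Theorem \ref{op-thm} with $\kappa = 2$ contradicts the semisolvability of $\K$ in $\lambda$. The main obstacle is the second step: carefully auditing VanDieren's construction in \cite[Theorem~5.8]{vv-symmetry-transfer-afml} to confirm that all bases over which splitting or amalgamation is invoked are amalgamation bases, and that the Galois-type computations witnessing (\ref{important-1})--(\ref{important-2}) remain valid in this setting. I expect this to be a routine but non-trivial bookkeeping exercise, after which the rest of the argument closes exactly as in Corollary \ref{sym-cor}.
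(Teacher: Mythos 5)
Your proposal follows the paper's proof of Corollary \ref{sym-thm} essentially verbatim: invoke the Shelah--Villaveces theorem in its original (density-of-amalgamation-bases) context, rerun the construction behind Fact \ref{op-sym-fact} to get the $(2,\mu)$-order property from failure of symmetry, and contradict Theorem \ref{op-thm}. The only discrepancy is that you assert the construction reaches length $\lambda$, whereas the paper explicitly cautions that in this weaker setting it is not clear the construction can be continued past $\mu^+$ --- but since your final step only uses length $\mu^+$ (together with the no-maximal-models hypothesis to place the witnessing sequence inside a model of cardinality $\lambda$ so that Theorem \ref{op-thm} applies), the argument closes all the same.
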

\begin{proof}[Proof sketch]
  We follow the proof of Corollary \ref{sym-cor}. Fact \ref{shvi} was originally proven in the context here, and the proof of Fact \ref{op-sym-fact} shows that failure of $\mu$-symmetry implies the $(2, \mu)$-order property of length $\mu^+$ (in that case it is not clear that the construction can be continued all the way to $\lambda$). Theorem \ref{op-thm} (with $\kappa$ there standing for $2$ here) and the no maximal models hypothesis shows that $\K$ cannot have the $(2, \mu)$-order property of length $\mu^+$, so symmetry in $\mu$ must hold.
\end{proof}

We obtain the desired proof of \cite[Theorems II.9.1, III.10.3]{vandierennomax}. This also generalizes Corollary \ref{big-cor}.(\ref{big-cor-2}).

\begin{cor}\label{uq-limit-shvi}
  Let $\lambda > \LS (\K)$. Assume that $\K_{<\lambda}$ has no maximal models. Assume that $\K$ is semisolvable in $\lambda$ and fix $\mu \in [\LS (\K), \lambda)$. If in $\K_{\mu}$ amalgamation bases are dense, universal extensions exist, limit models are amalgamation bases, and (\ref{problem-1}) above holds\footnote{or just: the union of any increasing chain of limit models in $\K_{\mu}$ of length less than $\mu^+$ is an \emph{amalgamation base}.}, then reduced towers in $\K_\mu$ are continuous. In particular, whenever $M_0, M_1, M_2 \in \K_{\mu}$ are such that $M_1$ and $M_2$ are limit models over $M_0$, we have that $M_1 \cong_{M_0} M_2$.
\end{cor}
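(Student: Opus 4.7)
The plan is to reduce everything to the $\mu$-symmetry property, which we already know follows from our hypotheses by Corollary \ref{sym-thm}. Specifically, the assumptions of Corollary \ref{sym-thm} are exactly the framework here (semisolvability in $\lambda$, no maximal models in $\K_{<\lambda}$, density of amalgamation bases in $\K_\mu$, existence of universal extensions, and limit models being amalgamation bases), so we obtain $\mu$-symmetry immediately. The additional hypothesis (\ref{problem-1}) above is not needed for symmetry itself, but will be needed to bridge symmetry with continuity of reduced towers.

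Next, I would invoke the symmetry-implies-continuity-of-reduced-towers machinery of VanDieren \cite{vandieren-symmetry-apal}. The point is that VanDieren has isolated an argument showing that, in the Shelah-Villaveces context (density of amalgamation bases, universal extensions exist, limit models are amalgamation bases), \emph{if} unions of chains of limit models of length $<\mu^+$ remain limit models (hypothesis (\ref{problem-1})), and if $\mu$-symmetry holds, then any reduced tower in $\K_\mu$ is continuous. This is exactly the statement VanDieren partially established in \cite{vandierennomax, nomaxerrata} under the side assumption $\lambda = \mu^+$; the role of the restrictive side assumption there was precisely to secure $\mu$-symmetry via short chain arguments. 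In our setup Corollary \ref{sym-thm} supplies $\mu$-symmetry directly for any $\mu < \lambda$, so VanDieren's argument applies verbatim.

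For the ``in particular'' clause, I would then run the classical Shelah-Villaveces argument from \cite[Theorem II.9.1]{vandierennomax} (equivalently \cite[Theorem 3.3.7]{shvi635}): given $M_1, M_2 \in \K_\mu$ each limit over $M_0$, one builds a pair of reduced towers ending in $M_1$ and $M_2$ respectively, and uses continuity of reduced towers together with uniqueness of amalgamation bases within the tower to construct an isomorphism $M_1 \cong_{M_0} M_2$ step by step. No new ideas are needed beyond what is already in \cite{vandierennomax} once reduced-tower continuity has been secured.

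The main obstacle is the first step: historically the difficulty was extracting $\mu$-symmetry without committing to a successor categoricity/solvability cardinal, because prior proofs of symmetry (e.g.\ in \cite{vv-symmetry-transfer-afml}) used an order property of length $\lambda$ and thus needed $\lambda \ge \beth_{(2^\mu)^+}$. With Theorem \ref{op-thm} in hand, the order property of length $\mu^+$ is already enough to derive a contradiction with semisolvability in \emph{any} $\lambda > \mu$, which is precisely what Corollary \ref{sym-thm} exploits. The remainder of the argument is essentially bookkeeping through the existing VanDieren machinery.
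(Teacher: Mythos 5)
Your proposal is correct and follows essentially the same route as the paper: obtain $\mu$-symmetry from Corollary \ref{sym-thm}, feed it into VanDieren's symmetry-implies-continuity-of-reduced-towers argument from \cite{vandieren-symmetry-apal} (together with hypothesis (\ref{problem-1})), and then conclude uniqueness of limit models via the proof of \cite[Theorem II.9.1]{vandierennomax} as noted in \cite{nomaxerrata}. Your historical remarks about why the side assumption $\lambda = \mu^+$ was previously needed are accurate and consistent with the paper's discussion.
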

\begin{proof}
  By Corollary \ref{sym-thm}, $\K$ has symmetry in $\mu$. By the proof of \cite[Theorem 3]{vandieren-symmetry-apal}, reduced towers in $\K_{\mu}$ are continuous. As pointed out in \cite{nomaxerrata}, the proof of \cite[Theorem II.9.1]{vandierennomax} now goes through to prove the uniqueness of limit models.
\end{proof}

\bibliographystyle{amsalpha}
\bibliography{categ-saturated}

\end{document}